\newtheorem{theorem}{Theorem}[section]
\theoremstyle{definition}
\newtheorem{definition}[theorem]{Definition}
\newtheorem{example}[theorem]{Example}
\newtheorem{corollary}[theorem]{Corollary}
\theoremstyle{remark}
\numberwithin{equation}{section}
\begin{document}

\title{$\phi$-$(k,n)$-absorbing    and  $\phi$-$(k,n)$-absorbing primary   hyperideals in a Krasner $(m,n)$-hyperring
  }

\author{M. Anbarloei}
\address{Department of Mathematics, Faculty of Sciences,
Imam Khomeini International University, Qazvin, Iran.
}

\email{m.anbarloei@sci.ikiu.ac.ir }


\subjclass[2010]{ 20N20, 16Y99, 20N15, 06E20.
}


\keywords{  $\phi$-$(k,n)$-absorbing hyperideal, $\phi$-$(k,n)$-absorbing primary hyperideal}

\begin{abstract}
Various expansions of prime hyperideals have been studied in a Krasner $(m,n)$-hyperring $R$. For instance, a proper hyperideal $Q$ of $R$ is called weakly $(k,n)$-absorbing {\bf(}primary{\bf)} provided that for $r_1^{kn-k+1} \in R$, $g(r_1^{kn-k+1}) \in Q-\{0\}$ implies that there are $(k-1)n-k+2$ of the $r_i^,$s whose $g$-product is in $Q$ {\bf (}$g(r_1^{(k-1)n-k+2}) \in Q$ or a $g$-product of $(k-1)n-k+2$ of $r_i^,$s ,except $g(r_1^{(k-1)n-k+2})$, is in ${\bf r^{(m,n)}}(Q)${\bf )}.  
In this paper, we aim to extend the notions to the concepts of $\phi$-$(k,n)$-absorbing and $\phi$-$(k,n)$-absorbing primary hyperideals. 
Assume that $\phi$ is a function from $ \mathcal{HI}(R)$ to $\mathcal{HI}(R) \cup \{\varnothing\}$ such that $\mathcal{HI}(R)$ is the set of hyperideals of $R$ and $k$ is a positive integer. We call a proper hyperideal $Q$ of $R$ a $\phi$-$(k,n)$-absorbing {\bf (}primary {\bf )} hyperideal if for $r_1^{kn-k+1} \in R$, $g(r_1^{kn-k+1}) \in Q-\phi(Q)$ implies that there are $(k-1)n-k+2$ of the $r_i^,$s whose $g$-product is in $Q$ {\bf (}$g(r_1^{(k-1)n-k+2}) \in Q$ or a $g$-product of $(k-1)n-k+2$ of $r_i^,$s ,except $g(r_1^{(k-1)n-k+2})$, is in ${\bf r^{(m,n)}}(Q)$ {\bf )}. Several properties and characterizations of them are presented. 
\end{abstract}
\maketitle
\section{Introduction}
Extensions of prime and primary ideals to the context of $\phi$-prime and $\phi$-primary ideals are studied  in \cite{ander, darani}. Afterwards, Khaksari in \cite{khak} and Badawi et al. in \cite{bada} introduced $\phi$-$2$-prime and $\phi$-$2$-primary ideals, respectively. Let $R$ be a commutative ring. Suppose that $\phi$ is a function from $ \mathcal{I}(R)$ to $\mathcal{I}(R) \cup \{\varnothing\}$ where $\mathcal{I}(R)$ is the set of ideals of  $R$. A  proper ideal $I$ 
of $R$ is said to be a $\phi$-2-absorbing ideal if whenever  $x,y,z \in R$, with $xyz \in I-\phi(I)$ implies that $xy \in I$ or $xz \in I$ or $yz \in I$. Also, A  proper ideal $I$ 
of $R$ is called a $\phi$-2-absorbing primary ideal if for every $x,y,z \in R$, $xyz \in I-\phi(I)$ implies that $xy \in I$ or $xz \in {\bf r}(I)$ or $yz \in {\bf r}(I)$.

Hyperstructures are algebraic structures equipped with at least one multi-valued operation, called a hyperoperation. A hyperoperation on a nonempty set is a mapping from to the nonempty power set. Hundreds of papers and several books have been written on this topic (for more details see \cite{amer2, s2, s3,  davvaz1, s6, s5,s1, davvaz2, s4, s10, jian}).
An $n$-ary extension of algebraic structures is the most natural method for  deeper understanding of their fundamental properties. 
Mirvakili and Davvaz in \cite{cons} introduced $(m,n)$-hyperrings and gave several results in this respect.  They defined and described a generalization of the notion of a hypergroup  and a generalization of an $n$-ary group,   which is called $n$-ary hypergroup \cite{s9}.
 Some review of the $n$-ary structures  can be found in  in  \cite{l1, l2, l3, ma, rev1}. One important class of hyperrings, where the addition is a hyperoperation, while the multiplication is an ordinary binary operation,  is  Krasner hyperring.
 An extension  of the Krasner hyperrings, which is a subclass of $(m,n)$-hyperrings, was presented by Mirvakili and Davvaz \cite{d1}, which  is called Krasner $(m,n)$-hyperring. Some important
hyperideals namely Jacobson radical, nilradical, $n$-ary prime and primary hyperideals  and $n$-ary multiplicative subsets of Krasner $(m, n)$-hyperrings were defined  by Ameri and Norouzi in \cite{sorc1}. Afterward, the concept of $(k,n)$-absorbing (primary) hyperideals  was studied by Hila et al. \cite{rev2}. 
Norouzi et al.  gave a new definition for normal hyperideals in Krasner $(m,n)$-hyperrings, with respect to that one given in \cite{d1} and they showed that these hyperideals correspond to strongly regular relations \cite{nour}.  Direct limit of a direct system was  defined and analysed  by Asadi and Ameri in the category of Krasner $(m,n)$-hyperrigs \cite{asadi}.  The notion of $\delta$-primary  hyperideals in Krasner $(m,n)$-hyperrings, which unifies the prime and primary hyperideals under one frame, was presented in \cite{mah3}. Recently, Davvaz et al.  introduced new expansion classes, namely weakly $(k,n)$-absorbing  ( primary ) hyperideals in a Krasner $(m,n)$-hyperring \cite{weak}.

In this paper, we introduce and study the notions of $\phi$-$(k,n)$-absorbing and $\phi$-$(k,n)$-absorbing primary hyperideals in a commutative Krasner $(m,n)$-hyperring. A number of main results  are given to explain the general framework of these structures.
Among many results in this paper, it is shown (Theorem \ref{inclu}) that if $Q$ is a $\phi$-$(k,n)$-absorbing hyperideal of $R$, then $Q$ is a $\phi$-$(s,n)$-absorbing hyperideal for all $s \geq k$. Although every $\phi$-$(k,n)$-absorbing of a Krasner $(m,n)$-hyperring is  $\phi$-$(k,n)$-absorbing primary, Example \ref{exa2} shows that the converse  may not be always true. It is shown (Theorem \ref{kweak}) that 
$Q$  is   a $\phi$-$(k,n)$-absorbing primary hyperideal of $R$ if and only if  $Q/\phi(Q)$ is a weakly $(k,n)$-absorbing primary hyperideal of $R/\phi(Q)$. In Theorem \ref{weak5}, we show that if $Q$ is a $\phi$-$(k,n)$-absorbing primary hyperideal of $R$ but is not a $(k,n)$-absorbing primary, then $g(Q^{k(n-1)+1}) \subseteq \phi(Q)$. As a result of the theorem we conclude that if $Q$ is a $\phi$-$(k,n)$-absorbing primary hyperideal of $R$  that is not a $(k,n)$-absorbing primary hyperideal of $R$, then ${\bf r^{(m,n)}}(Q)={\bf r^{(m,n)}}(\phi(Q))$.
\section{Krasner $(m,n)$-hyperrings}
In this section, we summarize the preliminary definitions which are related to Krasner $(m,n)$-hyperrings. \\
Let $A$ be a non-empty set and $P^*(A)$  the
set of all the non-empty subsets of $A$.  An $n$-ary hyperoperation on $A$ is a map$f : A^n \longrightarrow P^*(A)$
 and the couple  $(A, f)$ is called an $n$-ary hypergroupoid. The notation $a_i^j$ will denote  the sequence $a_i, a_{i+1},..., a_j$ for $j \geq i$ and it is the empty symbol for $j < i$.
 If  $G_1,..., G_n$ are non-empty subsets of $A$, then we define
$f(G^n_1) = f(G_1,..., G_n) = \bigcup \{f(a^n_1) \ \vert \  a_i \in  G_i, 1 \leq i \leq  n \}.$ If $b_{i+1} =... = b_j = b$, we write $f(a^i_1, b^j_{i+1}, c^n_{j+1})=f(a^i_1, b^{(j-i)}, c^n_{j+1})$.
If $f$ is an $n$-ary hyperoperation, then $t$-ary hyperoperation $f_{(l)}$ is given by
\[f_{(l)}(a_1^{l(n-1)+1}) = f\bigg(f(..., f(f(a^n _1), a_{n+1}^{2n -1}),...), a_{(l-1)(n-1)+1}^{l(n-1)+1}\bigg).\]
where $t = l(n- 1) + 1$.
\begin{definition}
\cite{d1} $(R, f, g)$, or simply $R$, is defined as  a Krasner $(m, n)$-hyperring if the following statements hold: \\
(1) $(R, f$) is a canonical $m$-ary hypergroup;\\
(2) $(R, g)$ is a $n$-ary semigroup;\\
(3) The $n$-ary operation $g$ is distributive with respect to the $m$-ary hyperoperation $f$ , i.e., for every $a^{i-1}_1 , a^n_{ i+1}, x^m_ 1 \in R$, and $1 \leq i \leq n$,
\[g\bigg(a^{i-1}_1, f(x^m _1 ), a^n _{i+1}\bigg) = f\bigg(g(a^{i-1}_1, x_1, a^n_{ i+1}),..., g(a^{i-1}_1, x_m, a^n_{ i+1})\bigg);\]
(4) $0$ is a zero element  of the $n$-ary operation $g$, i.e., for each $a^n_ 1 \in R$ , 
$g(a_1^{i-1},0,a_{i+1}^n) = 0$.
\end{definition}
Throughout this paper,  $R$ denotes  a commutative Krasner $(m,n)$-hyperring with the scalar identity $1$.

A non-empty subset $T$ of $R$ is called a subhyperring of $R$ if $(T, f, g)$ is a Krasner $(m, n)$-hyperring. 
The non-empty subset $I$ of $R$  is a hyperideal  if $(I, f)$ is an $m$-ary subhypergroup
of $(R, f)$ and $g(x^{i-1}_1, I, x_{i+1}^n) \subseteq I$, for each $x^n _1 \in  R$ and  $1 \leq i \leq n$.

\begin{definition}
 \cite{sorc1} Let  $I$ be a proper hyperideal of  $R$. $I$ refers to   a prime hyperideal if for hyperideals $I_1^n$ of $R$, $g(I_1^ n) \subseteq P$ implies $I_i \subseteq I$ for some $1 \leq i \leq n$.
\end{definition}
 
Lemma 4.5 in \cite{sorc1} shows that   the proper hyperideal $I$ of $R$ is  prime  if for all $a^n_ 1 \in R$, $g(a^n_ 1) \in I$ implies  $a_i \in I$ for some $1 \leq i \leq n$. 
\
\begin{definition} \cite{sorc1}  The radical of the proper the hyperideal $I$ of $R$, denoted by ${\bf r^{(m,n)}}(I)$
is the intersection of  all  prime hyperideals of $R$ containing $I$. If the set of all prime hyperideals  which contain $I$ is empty, then ${\bf r^{(m,n)}}(I)=R$.
\end{definition}
 It was shown (Theorem 4.23 in \cite{sorc1})  that if $a \in {\sqrt I}^{(m,n)}$ then 
 there exists $s \in \mathbb {N}$ with $g(a^ {(s)} , 1_R^{(n-s)} ) \in I$ for $s \leq n$, or $g_{(l)} (a^ {(s)} ) \in I$ for $s = l(n-1) + 1$.
\begin{definition}
\cite{sorc1} A proper hyperideal  $I$ of $R$ is     primary  if $g(a^n _1) \in I$ and $a_i \notin I$ implies $g(a_1^{i-1}, 1_R, a_{ i+1}^n) \in {\bf r^{(m,n)}}(I)$ for some $1 \leq i \leq n$.
\end{definition}
Theorem 4.28 in \cite{sorc1} shows that the radical of a primary hyperideal of $R$ is prime.
 \begin{definition} 
\cite{rev2} Let $I$ be a proper hyperideal of  $R$. $I$ refers to an 
\begin{itemize}
\item[\rm(1)]~ $(k,n)$-absorbing hyperideal  if for  $r_1^{kn-k+1} \in R$, $g(r_1^{kn-k+1}) \in I$ implies that there exist $(k-1)n-k+2$ of the $r_i^,$s whose $g$-product is in $I$. In this case, if $k=1$, then $I$ is an $n$-ary prime hyperideal of $R$. If $n=2$ and $k=1$, then $I$ is a classic prime hyperideal of $R$.
\item[\rm(2)]~ $(k,n)$-absorbing primary hyperideal  if for  $r_1^{kn-k+1} \in R$, $g(r_1^{kn-k+1}) \in I$ implies that $g(r_1^{(k-1)n-k+2}) \in I$ or a $g$-product of $(k-1)n-k+2$ of the $r_i^,$s, except $g(r_1^{(k-1)n-k+2})$, is in ${\bf r^{(m,n)}}(I)$.
\end{itemize}
\end{definition}

\section{$\phi$-$(k,n)$-absorbing hyperideals}
In his paper \cite{weak}, Davvaz et al. introduced a generalization of $n$-ary  prime hyperideals in a Krasner $(k,n)$-hyperring, which they defined as weakly $(k,n)$-absorbing hyperideals. In this section, we generalize this notion to the context of $\phi$-$(k,n)$-absorbing hyperideals.
\begin{definition}
Assume that $\mathcal{HI}(R)$ is the set of hyperideals of  $R$ and $\phi: \mathcal{HI}(R) \longrightarrow \mathcal{HI}(R) \cup \{\varnothing\}$ is a function. Let $k$ be a positive integer. A proper hyperideal $Q$ of $R$ is said to be $\phi$-$(k,n)$-absorbing provided that for $r_1^{kn-k+1} \in R$,  $g(r_1^{kn-k+1}) \in Q-\phi(Q)$ implies that there are $(k-1)n-k+2$ of the $r_i^,$s whose $g$-product is in $Q$.
\end{definition}
\begin{example}
Consider the Krasner $(2,2)$-hyperring $R=\{0,1,x\}$ with the hyperaddition and multiplication defined by
$\vspace{0.5cm}$

$\hspace{3cm}$
$\begin{tabular}{|c||c|c|c|} 
\hline $+$ & $0$ & $1$ & $x$
\\ \hline \hline $0$ & $0$ & $1$ & $x$
\\ \hline $1$ & $1$ & $R$ &  $1$ 
\\ \hline $x$ & $x$ & $1$ & $\{0,x\}$ 
\\ \hline
\end{tabular}$
$\hspace{0.5cm}$
$\begin{tabular}{|c||c|c|c|} 
\hline $\cdot$ & $0$ & $1$ & $x$
\\ \hline \hline $0$ & $0$ & $0$ & $0$
\\ \hline $1$ & $0$ & $1$ & $x$ 
\\ \hline $x$ & $0$ & $x$ & $0$ 
\\ \hline
\end{tabular}$
$\vspace{0.5cm}$

Assume that   $\phi$ is  a function from $\mathcal{HI}(R)$ to $\mathcal{HI}(R) \cup \{\varnothing\}$ defined $\phi(I)=g(I^{(2)})$ for  $I \in \mathcal{HI}(R)$. Then the hyperideal $Q=\{0,x\}$ is a $\phi$-$(2,2)$-absorbing  hyperideal of $R$.
\end{example}
\begin{theorem} \label{zir}
Let $\phi_1, \phi_2: \mathcal{HI}(R) \longrightarrow \mathcal{HI}(R) \cup \{\varnothing\}$ be  two  functions such that for all $I \in  \mathcal{HI}(R)$, $\phi_1(I) \subseteq \phi_2(I)$. If $Q$ is a $\phi_1$-$(k,n)$-absorbing hyperideal of $R$, then $Q$ is a $\phi_2$-$(k,n)$-absorbing hyperideal.
\end{theorem}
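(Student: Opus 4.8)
The plan is to reduce the $\phi_2$-condition directly to the $\phi_1$-condition, exploiting only the set-theoretic consequence of the hypothesis $\phi_1(Q) \subseteq \phi_2(Q)$. First I would record the elementary observation that subtracting a larger set from $Q$ produces a smaller set: since $\phi_1(Q) \subseteq \phi_2(Q)$ as subsets of $R$, we obtain $Q - \phi_2(Q) \subseteq Q - \phi_1(Q)$. (When $\phi_i(Q)$ equals the empty symbol $\varnothing$ the inclusion $\phi_1(Q) \subseteq \phi_2(Q)$ and the set difference $Q - \phi_i(Q)$ are still interpreted set-theoretically, so this step is unaffected.)

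Next, to verify that $Q$ is $\phi_2$-$(k,n)$-absorbing, I would take arbitrary $r_1^{kn-k+1} \in R$ with $g(r_1^{kn-k+1}) \in Q - \phi_2(Q)$. By the inclusion just noted, this element also lies in $Q - \phi_1(Q)$. Since $Q$ is assumed to be $\phi_1$-$(k,n)$-absorbing, its defining property applies verbatim to this element: there exist $(k-1)n-k+2$ of the $r_i^,$s whose $g$-product belongs to $Q$. Because $Q$ is already proper as part of the hypothesis that it is a $\phi_1$-$(k,n)$-absorbing hyperideal, properness is inherited automatically, so $Q$ meets every clause of the definition of a $\phi_2$-$(k,n)$-absorbing hyperideal.

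The argument is essentially a monotonicity observation packaged around the definition, not a hyperring computation; no appeal to the distributivity axiom or to the internal structure of the $n$-ary product $g$ is required beyond what is already encoded in the definition of $\phi$-$(k,n)$-absorbing. Consequently I do not anticipate any genuine obstacle: the only point deserving a word of care is the edge case $\phi_i(Q) = \varnothing$, which the above remark handles, after which the reduction goes through unchanged.
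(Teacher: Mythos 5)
Your proof is correct and follows essentially the same route as the paper: both reduce the $\phi_2$-condition to the $\phi_1$-condition via the set-theoretic inclusion $Q-\phi_2(Q) \subseteq Q-\phi_1(Q)$ induced by $\phi_1(Q) \subseteq \phi_2(Q)$, and then invoke the $\phi_1$-$(k,n)$-absorbing hypothesis. Your additional remarks on the $\varnothing$ edge case and on properness are fine but not substantively different from the paper's argument.
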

\begin{proof}
Suppose that $g(r_1^{kn-k+1}) \in Q-\phi_2(Q)$ for $r_1^{kn-k+1} \in R$. From $\phi_1(Q) \subseteq \phi_2(Q)$, it follows that $g(r_1^{kn-k+1}) \in Q-\phi_1(Q)$. Since $Q$ is a $\phi_1$-$(k,n)$-absorbing hyperideal of $R$, we conclude that there are $(k-1)n-k+2$ of the $r_i^,$s whose $g$-product is in $Q$, as needed.
\end{proof}
\begin{theorem} \label{inclu}
Let $\phi: \mathcal{HI}(R) \longrightarrow \mathcal{HI}(R) \cup \{\varnothing\}$ be a function. If $Q$ is a $\phi$-$(k,n)$-absorbing hyperideal of $R$, then $Q$ is a $\phi$-$(s,n)$-absorbing hyperideal for all $s \geq k$.
\end{theorem}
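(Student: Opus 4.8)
The plan is to prove the statement by induction on $s$, for which it suffices to establish the single step that a $\phi$-$(k,n)$-absorbing hyperideal is $\phi$-$(k+1,n)$-absorbing; iterating this step then yields the conclusion for all $s \geq k$. Throughout I would use that $(R,g)$ is an $n$-ary semigroup, so that any iterated $g$-product is single-valued and insensitive to the way its factors are bracketed.

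For the inductive step I would take $r_1^{(k+1)n-k} \in R$ with $g(r_1^{(k+1)n-k}) \in Q-\phi(Q)$, noting that $(k+1)n-(k+1)+1 = (k+1)n-k$ is the number of factors required by the definition for $s=k+1$. First I would set $y=g(r_1^n)$ and, using associativity of the iterated product, rewrite
\[
g(r_1^{(k+1)n-k}) = g\bigl(y, r_{n+1}^{(k+1)n-k}\bigr),
\]
which is an iterated $g$-product of the $kn-k+1$ elements $y, r_{n+1}, \ldots, r_{(k+1)n-k}$. The essential point is that this regrouping does not change the value of the product, so the right-hand side again lies in $Q-\phi(Q)$; this is exactly what permits the $\phi$-$(k,n)$-absorbing hypothesis to be applied to these $kn-k+1$ elements.

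Applying that hypothesis, I would obtain $(k-1)n-k+2$ of the elements $y, r_{n+1}, \ldots, r_{(k+1)n-k}$ whose $g$-product lies in $Q$, and then split into two cases. If $y$ is among the chosen factors, expanding $y=g(r_1^n)$ turns this into a $g$-product of $n+\bigl((k-1)n-k+2-1\bigr)=kn-k+1$ of the original $r_i$, which is precisely the number $((k+1)-1)n-(k+1)+2$ demanded by $\phi$-$(k+1,n)$-absorbing. If $y$ is not chosen, then a $g$-product $c$ of $(k-1)n-k+2$ of the $r_j$ with $j\geq n+1$ lies in $Q$; since $Q$ is a hyperideal it absorbs $g$-multiplication, so $g(c, r_1, \ldots, r_{n-1}) \in Q$, and this is a $g$-product of $\bigl((k-1)n-k+2\bigr)+(n-1)=kn-k+1$ of the original factors. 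In either case the required sub-product of the correct size is produced.

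The verification is largely bookkeeping with the indices $kn-k+1$, $(k-1)n-k+2$, and the like, and I expect the only genuinely delicate point to be the observation highlighted above: because regrouping the factors leaves the value of the product unchanged, membership in $Q-\phi(Q)$ is preserved, so the $\phi$-hypothesis transfers verbatim and no extra condition on $\phi$ is needed. One should also record that $n\geq 2$ guarantees both that the $(k-1)n-k+2$ factors in the second case can indeed be selected from among $r_{n+1}, \ldots, r_{(k+1)n-k}$ and that the $n-1$ additional factors $r_1, \ldots, r_{n-1}$ used to enlarge $c$ are available, so that all the counts remain consistent.
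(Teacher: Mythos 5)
Your proof is correct and follows essentially the same route as the paper: induction on the single step $k \to k+1$, collapsing a block of the given factors into one $g$-product so that the $\phi$-$(k,n)$-absorbing hypothesis applies, then splitting according to whether that block occurs among the chosen factors (expanding it when it does, and padding with extra factors via the hyperideal absorption property when it does not). If anything, your choice of block $y=g(r_1^n)$ --- a single application of $g$ --- makes the factor counts work uniformly for all $n \geq 2$, whereas the paper's grouping of the first $2n-2$ elements is only a well-formed $g$-product (and only yields the right number of factors for the inductive hypothesis) when $n=2$.
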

\begin{proof}
Let us use the induction on $k$ that if  $Q$ is $\phi$-$(k,n)$-absorbing hyperideal of $R$, then $Q$ is $\phi$-$(k+1,n)$-absorbing. Assume that $Q$ is $\phi$-$(2,n)$-absorbing and $g(r_1^{2n-2},g(r_{2n-1}^{3n-2})) \in Q-\phi(Q)$ for some $r_1^{3n-2} \in R$. Since $Q$ is $\phi$-$(2,n)$-absorbing, then there are $n$  of the $r_i^,$s except $g(r_{2n-1}^{3n-2})$ whose $g$-product is in $Q$ and so there are $2n-1$ of the $r_i^,$s whose $g$-product is in $Q$. This shows that $Q$ is $\phi$-$(3,n)$-absorbing. Assume that $Q$ is $\phi$-$(k,n)$-absorbing and $g(g(r_1^{2n-2}),r_{2n-1}^{(k+1)n-(k+1)+1}) \in Q-\phi(Q)$ for some $r_1^{(k+1)n-(k+1)+1} \in R$. Since $Q$ is $\phi$-$(k,n)$-absorbing, we conclude that $g(g(r_1^{2(n-1)}),r_{2n-1},\cdots, \widehat{r_i}, \cdots, r_{(k+1)n-(k+1)+1}) \in Q$ for some $2(n-1) \leq i \leq (k+1)n-(k+1)+1$ or $g(r_{2n-1}^{(k+1)n-(k+1)+1}) \in Q$. The former case shows that $Q$ is $\phi$-$(k+1,n)$-absorbing. In the latter case, we obtain $g(r_1^{n-1},r_{2n-1}^{(k+1)n-(k+1)+1}) \in Q$ since  $g(r_1^{2(n-1)}) \in Q$. Thus $Q$ is $\phi$-$(k+1,n)$-absorbing.
\end{proof}
Recall from \cite{car} that if $(R_1, f_1, g_1)$ and $(R_2, f_2, g_2)$ are two Krasner $(m,n)$-hyperrings such that $1_{R_1}$ and $1_{R_2}$ are scalar identitis of $R_1$ and $R_2$, respectively, then 
  $(R_1 \times R_2, f_1\times f_2 ,g_1 \times g_2 )$ is a Krasner $(m, n)$-hyperring where

$f=f_1 \times f_2((a_{1}, b_{1}),\cdots,(a_m,b_m)) = \{(a,b) \ \vert \ \ a \in f_1(a_1^m), b \in f_2(b_1^m) \}$

$g=g_1 \times g_2 ((x_1,y_1),\cdots,(x_n,y_n)) =(g_1(x_1^n),g_2(y_1^n)) $,\\
for all $a_1^m,x_1^n \in R_1$ and $b_1^m,y_1^n \in R_2$
 \begin{theorem} \label{car}
 Let $(R_i,f_i,g_i)$ be a commutative
Krasner $(m,n)$-hyperring  for each $1 \leq i \leq kn-k+1$ and $\phi_i: \mathcal{HI}(R_i) \longrightarrow \mathcal{HI}(R_i) \cup \{\varnothing\}$ be a function. Let $Q_i$ be a  hyperideal of $R_i$ for each $1 \leq i \leq kn-k+1$ and $\phi=\phi_1 \times \cdots \times \phi_{kn-k+1}$. If $Q=Q_1 \times \cdots \times Q_{kn-k+1}$ is a $\phi$-$(k+1,n)$-absorbing hyperideal of $R=R_1 \times \cdots \times R_{kn-k+1}$, then $Q_i$ is a $\phi_i$-$(k,n)$-absorbing hyperideal of $R_i$ and $Q_i \neq R_i$ for all $1 \leq i \leq kn-k+1$. 
 \end{theorem}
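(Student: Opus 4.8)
The plan is to fix an index, say $i=1$ (the other coordinates are handled identically, by symmetry), and show directly that $Q_1$ is $\phi_1$-$(k,n)$-absorbing. So suppose $a_1^{kn-k+1}\in R_1$ satisfy $g_1(a_1^{kn-k+1})\in Q_1-\phi_1(Q_1)$, where $g_1(a_1^{kn-k+1})$ abbreviates the $k$-fold iterate $g_{1,(k)}$ on the $k(n-1)+1=kn-k+1$ arguments. I want to feed a carefully chosen tuple of $(k+1)(n-1)+1=(k+1)n-k$ elements of $R$ into the $\phi$-$(k+1,n)$-absorbing hypothesis so that its conclusion collapses to exactly the statement I need about the $a_j$'s. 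Throughout I use that $\phi(Q)=\phi_1(Q_1)\times\cdots\times\phi_{kn-k+1}(Q_{kn-k+1})$, so that an element of $Q$ fails to lie in $\phi(Q)$ as soon as one coordinate avoids the corresponding $\phi_i(Q_i)$.

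The construction uses two kinds of elements. First, the $kn-k+1$ ``data'' elements $X_j=(a_j,1,\dots,1)$ for $1\le j\le kn-k+1$, carrying $a_j$ in the first coordinate and the scalar identity elsewhere. Second, $n-1$ ``annihilator'' elements, built as follows: partition the $kn-k=k(n-1)$ coordinates $\{2,\dots,kn-k+1\}$ into $n-1$ blocks of size $k$, and for each block let the corresponding annihilator $Y_b$ carry $0$ in every coordinate of that block and $1$ in all remaining coordinates (including the first). This gives $(kn-k+1)+(n-1)=(k+1)n-k$ elements in total. Using the scalar identity and commutativity of $g$, the first coordinate of $g_{(k+1)}(X_1,\dots,X_{kn-k+1},Y_1,\dots,Y_{n-1})$ reduces to $g_1(a_1^{kn-k+1})\in Q_1-\phi_1(Q_1)$, while each coordinate $i\ge 2$ picks up the $0$ carried by the annihilator of its block and hence equals $0$. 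Thus this $g$-product equals $(g_1(a_1^{kn-k+1}),0,\dots,0)$, which lies in $Q$ and, because its first coordinate avoids $\phi_1(Q_1)$, avoids $\phi(Q)$.

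Now apply the hypothesis: there is a sub-collection $S$ of $kn-k+1$ of these elements whose $g$-product (again a $k$-fold iterate) lies in $Q$. The decisive point is that $S$ must contain every annihilator $Y_b$: if some $Y_b$ were missing, then in any coordinate $i$ of its block every remaining element of $S$ contributes a $1$, forcing that coordinate of the product to be $1\notin Q_i$ and contradicting $g_{(k)}(S)\in Q$ — here I use that the factors other than $R_1$ are proper. Since all $n-1$ annihilators lie in $S$, the remaining $kn-k+1-(n-1)=(k-1)n-k+2$ members of $S$ are data elements $X_{j_1},\dots,X_{j_t}$ with $t=(k-1)n-k+2$, and the first coordinate of $g_{(k)}(S)$ collapses, absorbing the $n-1$ identities contributed by the annihilators, to $g_1(a_{j_1},\dots,a_{j_t})\in Q_1$. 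This exhibits exactly $(k-1)n-k+2$ of the $a_j$'s whose $g$-product lies in $Q_1$, which is precisely the $\phi_1$-$(k,n)$-absorbing conclusion.

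It remains to prove the properness statement $Q_i\neq R_i$, and this is where I expect the real difficulty to sit, since the argument above already presupposed that the factors other than the one under analysis are proper. The natural route is by contradiction: assuming $Q_{i_0}=R_{i_0}$, the $i_0$-th coordinate imposes no constraint, so one should try to assemble a tuple whose full $g_{(k+1)}$-product lands in $Q-\phi(Q)$ while no sub-collection of $kn-k+1$ entries has $g$-product in $Q$, contradicting the hypothesis. Making this airtight — guaranteeing simultaneously that the witnessing product genuinely avoids $\phi(Q)$ and that no admissible subproduct slips into $Q$ — is the main obstacle, and it is the step I would spend the most care on, in particular checking that the interaction with the product map $\phi$ does not open an escape through $\phi(Q)$.
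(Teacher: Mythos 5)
Your argument for the absorbing part is correct under the hypothesis you flag, and it is in fact tighter than the paper's own proof. The paper pads the $kn-k+1$ data elements with $n-2$ copies of the all-identity element and a \emph{single} annihilator $(0,\dots,0,1_{R_i},0,\dots,0)$; with that choice, a qualifying sub-collection of size $kn-k+1$ is only forced (by properness of the other factors) to contain the annihilator, not the identity paddings, so for $n\geq 3$ its $g$-product may merely certify that a product of \emph{more} than $(k-1)n-k+2$ of the $r_j$'s lies in $Q_i$ --- which contradicts neither the choice of witness nor anything else, and is not the required conclusion. Your $n-1$ block-annihilators remove exactly this defect: every annihilator is forced into the sub-collection, so the number of data elements in it is pinned at exactly $(k-1)n-k+2$. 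So on this half you take a genuinely different route that repairs a hole the paper's ``This implies\dots'' step glosses over.

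The genuine gap is exactly where you located it, and it cannot be closed: the properness claim $Q_i\neq R_i$ is \emph{false} in general, hence so is the absorbing claim for an improper coordinate. Take $k=1$, $n=2$, let $\phi_1,\phi_2$ be the empty functions (so $\phi$-$(2,2)$-absorbing just means $(2,2)$-absorbing), let $Q_1=R_1$, and let $Q_2$ be a proper $n$-ary prime hyperideal of $R_2$. Then $Q=R_1\times Q_2$ is a proper hyperideal and is $(2,2)$-absorbing: if $g\bigl((x_1,y_1),(x_2,y_2),(x_3,y_3)\bigr)\in Q$ then $g(y_1,y_2,y_3)\in Q_2$, primeness gives some $y_j\in Q_2$, and the product of any two of the pairs including the $j$-th lies in $Q$; yet $Q_1=R_1$, so both conclusions of the theorem fail at $i=1$. (For general $k,n$ take $Q=R_1\times\cdots\times R_{kn-k}\times P$ with $P$ prime.) Note that the paper's proof has the identical circularity --- its assertion $g(a_1^{kn-k+1})\notin Q$ already presupposes $1_{R_j}\notin Q_j$ for some $j\neq i$ --- and it never addresses $Q_i\neq R_i$ at all, so you were not missing an idea that the paper supplies. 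The honest resolution is that properness of the factors must be added as a hypothesis rather than claimed as a conclusion; with that amendment your argument is a complete and correct proof of the absorbing statement, whereas the statement as printed is simply not a theorem.
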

\begin{proof}
Let $r_1^{kn-k+1} \in R_i$ such that $g(r_1^{kn-k+1}) \in Q_i-\phi_i(Q_i)$. Suppose by contradiction that $Q_i$ is not a $\phi_i$-$(k,n)$-absorbing  hyperideal of $R_i$. Define 

$a_1=(1_{R_1},\cdots, 1_{R_{i-1}},r_1,1_{R_{i+1}},\cdots, 1_{R_{kn-k+1}}),$

$ a_2=(1_{R_1},\cdots, 1_{R_{i-1}},r_2,1_{R_{i+1}},\cdots, 1_{R_{kn-k+1}}),$

$\vdots$

$a_{kn-k+1}=(1_{R_1},\cdots, 1_{R_{i-1}},r_{kn-k+1},1_{R_{i+1}},\cdots,1_{R_{kn-k+1}}), $

$a_{kn-k}=(1_{R_1},\cdots, 1_{R_{i-1}},1_{R_i},1_{R_{i+1}},\cdots,1_{R_{kn-k+1}}),$

$ a_{(k+1)n-(k+1)+1}=(0,\cdots, 0,1_{R_i},0,\cdots,0)$.\\
Hence $g(a_1^{(k+1)n-(k+1)+1)}) \in Q-\phi(Q)$ but $g(a_1^{kn-k+1}) \notin Q$. Since $Q$ is a $\phi$-$(k+1,n)$-absorbing hyperideal of $R$,  we conclude that one of $g$-productions of $kn-k+1$ of $a_i^,$s except $g(a_1^{(k+1)n-(k+1)+1})$ is in $Q$. This implies that there exist $(k-1)n-k+2$ of $r_i^,$s whose $g$-product is in $Q_i$ which is a contradiction. Consequently,  $Q_i$ is a $\phi_i$-$(k,n)$-absorbing hyperideal of $R_i$.
\end{proof}
 Assume that $(R_1, f_1, g_1)$ and $(R_2, f_2, g_2)$ are two Krasner $(m, n)$-hyperrings. Recall from \cite{d1} that a mapping
$h : R_1 \longrightarrow R_2$ is called a homomorphism if for all $a^m _1 \in R_1$ and $b^n_ 1 \in R_1$ we have
$(1) h(f_1(a_1,..., a_m)) = f_2(h(a_1),...,h(a_m))$, 
$(2) h(g_1(b_1,..., b_n)) = g_2(h(b_1),...,h(b_n)). $
Moreover, recall from \cite{Jaber } that a function $\phi: \mathcal{HI}(R) \longrightarrow \mathcal{HI}(R) \cup \{\varnothing\}$ is called a reduction function of $\mathcal{HI}(R)$ if $\phi(P) \subseteq P$ and $P \subseteq Q$ implies that $\phi(P) \subseteq \phi(Q)$ for all $P,Q \in \mathcal{HI}(R)$. Now, assume that $R_1$ and $R_2$ are two Krasner $(m,n)$-hyperring such that  $h: R_1 \longrightarrow R_2$ is a homomorphism. Suppose that $\phi_1$ and $\phi_2$ are two reduction functions of $\mathcal{HI}(R_1)$ and $\mathcal{HI}(R_2)$, respectively.  If $\phi_1(h^{-1}(I_2))=h^{-1}(\phi_2(I_2))$ for all $I_2 \in \mathcal{HI}(R_2)$, then we say $h$ is a $\phi_1$-$\phi_2$-homomorphism. Let $h$ be a $\phi_1$-$\phi_2$-epimorphism from $R_1$ to $R_2$ and let $I_1$ be a hyperideal of $R_1$ with $Ker (h) \subseteq I_1$. It is easy to see that $\phi_2(h(I_1))=h(\phi_1(I_1))$.
\begin{example}
Let $R_1$ and $R_2$ be two Krasner $(m,n)$-hyperring and $\phi_1$ and $\phi_2$ be two empty reduction functions of $\mathcal{HI}(R_1)$ and $\mathcal{HI}(R_2)$, respectively.  Then every homorphism $h$ from $R_1$ to $R_2$ is a $\phi_1$-$\phi_2$-homomorphism.
\end{example}
\begin{theorem} \label{homo1}
Let $h:R_1 \longrightarrow R_2$ be a $\phi_1$-$\phi_2$-homomorphism, where $\phi_1$ and $\phi_2$ are two reduction functions of $\mathcal{HI}(R_1)$ and $\mathcal{HI}(R_2)$, respectively. Then:
\begin{itemize} 
\item[\rm(1)]~ If $Q_2$ is a $\phi_2$-$(k,n)$-absorbing  hyperideal of $R_2$, then $h^{-1}(Q_2)$ is a $\phi_1$-$(k,n)$-absorbing  of $R_1$. 
\item[\rm(2)]~If $h$ is surjective and $Q_1$ is a $\phi_1$-$(k,n)$-absorbing  hyperideal of $R_1$ with $Ker (h) \subseteq Q_1$, then $h(Q_1)$ is a $\phi_2$-$(k,n)$-absorbing  hyperideal of $R_2$.
\end{itemize} 
\end{theorem}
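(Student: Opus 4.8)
The plan is to produce both constructions as proper hyperideals and then transport the absorbing condition across $h$ using the two compatibility identities already recorded immediately before the statement: the defining property $\phi_1(h^{-1}(I_2))=h^{-1}(\phi_2(I_2))$ of a $\phi_1$-$\phi_2$-homomorphism, and the identity $\phi_2(h(I_1))=h(\phi_1(I_1))$ valid whenever $Ker(h)\subseteq I_1$. Throughout I would use the standing fact that a homomorphism preserves not merely $g$ but every iterated operation $g_{(l)}$, so that the conclusion of the absorbing property translates cleanly. Here it is worth recording the arities: $kn-k+1=k(n-1)+1$ is the arity of $g_{(k)}$, while the subproduct arity $(k-1)n-k+2=(k-1)(n-1)+1$ is the arity of $g_{(k-1)}$; hence a statement about a ``$g$-product of $(k-1)n-k+2$ of the $r_i^,$s'' is a statement about some $g_{(k-1)}$, and is carried along by $h$ in either direction.

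For (1), I would first observe that $h^{-1}(Q_2)$ is a hyperideal of $R_1$, and it is proper because $h$ carries the scalar identity of $R_1$ to that of $R_2$, so $1_{R_1}\notin h^{-1}(Q_2)$. Now assume $g(r_1^{kn-k+1})\in h^{-1}(Q_2)-\phi_1(h^{-1}(Q_2))$ for some $r_1^{kn-k+1}\in R_1$. Applying $h$ and invoking $\phi_1(h^{-1}(Q_2))=h^{-1}(\phi_2(Q_2))$, I would deduce $g(h(r_1),\dots,h(r_{kn-k+1}))\in Q_2-\phi_2(Q_2)$. Since $Q_2$ is $\phi_2$-$(k,n)$-absorbing, some $g$-product of $(k-1)n-k+2$ of the $h(r_i)^,$s lies in $Q_2$; as that product equals $h$ applied to the corresponding $g_{(k-1)}$-product of the $r_i^,$s, pulling it back through $h^{-1}$ places that subproduct of the $r_i^,$s in $h^{-1}(Q_2)$, which is exactly what is required.

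For (2), surjectivity of $h$ makes $h(Q_1)$ a hyperideal of $R_2$, and the hypothesis $Ker(h)\subseteq Q_1$ gives $h^{-1}(h(Q_1))=Q_1$; hence $h(Q_1)\neq R_2$ (otherwise $Q_1=R_1$), and moreover $\phi_2(h(Q_1))=h(\phi_1(Q_1))$. Suppose $g(s_1^{kn-k+1})\in h(Q_1)-\phi_2(h(Q_1))$. Using surjectivity I would lift each $s_i$ to some $r_i\in R_1$ with $h(r_i)=s_i$, so that $g(s_1^{kn-k+1})=h(g(r_1^{kn-k+1}))$. From $h^{-1}(h(Q_1))=Q_1$ I obtain $g(r_1^{kn-k+1})\in Q_1$, and from $g(s_1^{kn-k+1})\notin h(\phi_1(Q_1))$ together with the trivial implication $x\in\phi_1(Q_1)\Rightarrow h(x)\in h(\phi_1(Q_1))$ I obtain $g(r_1^{kn-k+1})\notin\phi_1(Q_1)$. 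Thus $g(r_1^{kn-k+1})\in Q_1-\phi_1(Q_1)$, and since $Q_1$ is $\phi_1$-$(k,n)$-absorbing some $g_{(k-1)}$-product of the $r_i^,$s lies in $Q_1$; applying $h$ puts the matching subproduct of the $s_i^,$s in $h(Q_1)$.

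I expect the only genuinely delicate point to be the exact treatment of the exceptional sets $\phi(Q)$. In (1) this is automatic from the definition of $\phi_1$-$\phi_2$-homomorphism, but in (2) it rests on the previously asserted identity $\phi_2(h(Q_1))=h(\phi_1(Q_1))$, and the subtlety is that deriving $g(r_1^{kn-k+1})\notin\phi_1(Q_1)$ from $h(g(r_1^{kn-k+1}))\notin h(\phi_1(Q_1))$ uses only the easy set-image direction, not injectivity of $h$. A secondary care point, also in (2), is that the choice of lift $r_i$ is harmless: two preimages of $s_i$ differ by an element of $Ker(h)\subseteq Q_1$, so the resulting membership conclusions are independent of the choices made.
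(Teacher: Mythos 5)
Your proposal is correct and follows essentially the same route as the paper's own proof: in (1) you push the hypothesis through $h$ via $\phi_1(h^{-1}(Q_2))=h^{-1}(\phi_2(Q_2))$ and pull the resulting subproduct back, and in (2) you lift the $s_i$, use $Ker(h)\subseteq Q_1$ together with $\phi_2(h(Q_1))=h(\phi_1(Q_1))$ to land in $Q_1-\phi_1(Q_1)$, and push the conclusion forward. Your extra remarks (properness of the two hyperideals, the identity $h^{-1}(h(Q_1))=Q_1$, the set-image direction for the exceptional sets, and independence of the choice of lifts) only make explicit details the paper leaves implicit.
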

\begin{proof}
 $(1)$ Let $Q_2$ be a $\phi_2$-$(k,n)$-absorbing  hyperideal of $R_2$ and $g(r_1^{kn-k+1}) \in h^{-1}(Q_2)-\phi_1(h^{-1}(Q_2))$ for some $r_1^{kn-k+1} \in R_1$. Then we get $h(g(r_1^{kn-k+1}))=g(h(r_1),\cdots,h(r_{kn-k+1})) \in Q_2-\phi_2(Q_2)$. Since $Q_2$ is a $\phi_2$-$(k,n)$-absorbing  hyperideal of $R_2$, we conclude that the image of $h$ of ${(k-1)n-k+2}$ of $r_i^,$s whose $g$-product is in $Q_2$. Then there exist ${(k-1)n-k+2}$ of $r_i^,$s whose $g$-product is in $h^{-1}(Q_2)$. Thus $h^{-1}(Q_2)$ is a $\phi_1$-$(k,n)$-absorbing  of $R_1$.

 $(2)$ Suppose that  $Q_1$ is a $\phi_1$-$(k,n)$-absorbing  hyperideal of $R_1$ with $Ker (h) \subseteq Q_1$ and $h$ is surjective. Let  $g(s_1^{kn-k+1}) \in h(Q_1)-\phi_2(h(Q_1))$ for some $s_1^{kn-k+1} \in R_2$. Then there exist $r_i \in R_1$ for every $1 \leq i \leq kn-k+1$ such that  $h(r_i)=s_i$. Hence we get $h(g(r_1^{kn-k+1})=g(h(r_1),\cdots,h(r_{kn-k+1}))=g(s_1^{kn-k+1}) \in h(Q_1)$. Since $Ker (h) \subseteq Q_1$ and $h$ is a $\phi_1$-$\phi_2$-epimorphism, we have $g(r_1^{kn-k+1}) \in Q_1-\phi_1(Q_1)$. Since $Q_1$ is a $\phi_1$-$(k,n)$-absorbing  hyperideal of $R_1$, there are $(k-1)n-k+2$ of $r_i^,$s whose $g$-product is in $Q_1$. Now, since $h$ is a homomorphism, we are done.
\end{proof}
Let $P$ be a hyperideal of $R$. Then the set $R/P=\{f(a_1^{i-1},P,a_{i+1}^m) \ \vert \ a_1^{i-1},a_{i+1}^m \in R$\} with $m$-ary hyperoperation $f$ and $n$-operation $g$ is the quotient Krasner $(m,n)$-hyperring of $R$ by $P$. Theorem 3.2 in \cite{sorc1} shows that the projection map $\pi$ from $R$ to $R/P$, defined by $\pi(r)=f(r,P,0^{(m-2)})$, is homomorphism.
Let $P$ be  a hyperideal of $R$ and $\phi$ be a reduction function of $\mathcal{HI}(R)$. Then the function $\phi_q$ from $\mathcal{HI}(R/P)$ to $\mathcal{HI}(R/P) \cup \{\varnothing\}$ defined by $\phi_q(I/P)=\phi(I)/P$ is a reduction function. Now, we have the following theorem as a result of Theorem \ref{homo1} that is easily verified, and hence we omit the proof.
\begin{theorem}
Let $Q$ and $P$ be  two  hyperideals of $R$ and $\phi$ be a reduction function of $\mathcal{HI}(R)$ such that $P \subseteq \phi(Q) \subseteq Q$. If $Q$ is a $\phi$-$(k,n)$-absorbing  hyperideal of $R$, then $Q/P$ is a $\phi_q$-$(k,n)$-absorbing  hyperideal of $R/P$.
\end{theorem}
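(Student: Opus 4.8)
The plan is to obtain the statement as a direct specialization of Theorem \ref{homo1}(2), with the homomorphism taken to be the canonical projection. First I would set $h = \pi : R \longrightarrow R/P$, where $\pi(r) = f(r, P, 0^{(m-2)})$ is the projection that Theorem 3.2 of \cite{sorc1} recognizes as a homomorphism, and I would identify $R_1 = R$, $R_2 = R/P$, $\phi_1 = \phi$, and $\phi_2 = \phi_q$. Since $\phi_q$ has already been noted to be a reduction function of $\mathcal{HI}(R/P)$, the reduction hypotheses of Theorem \ref{homo1} are in place, and what remains is to check that $\pi$ meets the three requirements of part (2): surjectivity, $\mathrm{Ker}(\pi) \subseteq Q$, and the $\phi$-$\phi_q$-compatibility.

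The first two requirements are immediate. The projection $\pi$ is surjective by construction, and its kernel is exactly $P$; combining this with the hypothesis $P \subseteq \phi(Q) \subseteq Q$ yields $\mathrm{Ker}(\pi) = P \subseteq Q$, which is precisely the containment that Theorem \ref{homo1}(2) demands when we take $Q_1 = Q$.

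The step I expect to be the main obstacle is confirming that $\pi$ is a $\phi$-$\phi_q$-homomorphism, i.e. that $\phi(\pi^{-1}(J)) = \pi^{-1}(\phi_q(J))$ for every $J \in \mathcal{HI}(R/P)$. Here I would appeal to the lattice correspondence for the quotient: every hyperideal of $R/P$ is of the form $J = I/P$ for a unique hyperideal $I$ of $R$ with $P \subseteq I$, and for such $I$ one has $\pi^{-1}(I/P) = I$. Therefore $\phi(\pi^{-1}(I/P)) = \phi(I)$, whereas $\phi_q(I/P) = \phi(I)/P$ by the definition of $\phi_q$, so that $\pi^{-1}(\phi_q(I/P)) = \pi^{-1}(\phi(I)/P) = \phi(I)$ as well. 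The two sides therefore agree. The one delicate point is the well-definedness of the coset $\phi(I)/P$, which holds because $P = \mathrm{Ker}(\pi)$ sits inside every hyperideal arising as a preimage and because $\phi$, being a reduction function, is compatible with the passage to $R/P$.

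Having verified every hypothesis, I would invoke Theorem \ref{homo1}(2) to conclude that $\pi(Q)$ is a $\phi_q$-$(k,n)$-absorbing hyperideal of $R/P$. Finally, since $P \subseteq Q$, the image $\pi(Q)$ is exactly the quotient $Q/P$, and this gives the desired conclusion. Thus the entire proof is a routine application of Theorem \ref{homo1} once the canonical projection is identified as the relevant $\phi$-$\phi_q$-homomorphism.
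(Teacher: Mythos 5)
Your proposal is correct and is precisely the argument the paper intends: the paper states this theorem immediately after Theorem \ref{homo1}, declares it ``a result of Theorem \ref{homo1} that is easily verified,'' and omits the proof, so your role for $\pi$, the kernel check via $P \subseteq \phi(Q) \subseteq Q$, and the verification that $\pi$ is a $\phi$-$\phi_q$-homomorphism are exactly the details the paper leaves to the reader. You have simply written out the specialization the authors had in mind, so there is nothing to compare beyond noting that your write-up supplies the omitted verification.
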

\section{$\phi$-$(k,n)$-absorbing primary hyperideals}
\begin{definition}
Suppose that $\mathcal{HI}(R)$ is the set of hyperideals of  $R$ and $\phi: \mathcal{HI}(R) \longrightarrow \mathcal{HI}(R) \cup \{\varnothing\}$ is a function. Let $k$ be a positive integer. A proper hyperideal $Q$ of $R$ is called  $\phi$-$(k,n)$-absorbing primary if   $g(r_1^{kn-k+1}) \in Q-\phi(Q)$ for $r_1^{kn-k+1} \in R$ implies that  $g(r_1^{(k-1)n-k+2}) \in Q$ or a $g$-product  of $(k-1)n-k+2$ of  $r_i^,$s ,except $g(r_1^{(k-1)n-k+2})$, is in ${\bf r^{(m,n)}}(Q)$.
\end{definition} 
\begin{example} \label{exa}
Every $\phi$-$(k,n)$-absorbing of a Krasner $(m,n)$-hyperring is  $\phi$-$(k,n)$-absorbing primary.
\end{example} 
The converse  may not be always true as it is shown in the following example. 
\begin{example} \label{exa2}
Consider the Krasner $(2,2)$-hyperring  $R=[0,1]$ with the  $2$-ary hyperoperation defined as 
$a \oplus b=\{max\{a, b\}\}$ if $a \neq b$ and $a \oplus b=[0,a]$
if $a =b$
and multiplication is the usual multiplication on real numbers. Suppose  that   $\phi$ is  a function from $\mathcal{HI}(R)$ to $\mathcal{HI}(R) \cup \{\varnothing\}$ defined $\phi(I)=\cap_{i=1}^{\infty} g(I^{(i)})$ for  $I \in \mathcal{HI}(R)$.Then the hyperideal $Q=[0,0.5]$ is a  $\phi$-$(2,2)$-primary hyperideal of $R$ but it is not $\phi$-$(2,2)$-absorbing.
\end{example}
The next theorem provides us how to determine $\phi$-$(k,n)$-absorbing primary hyperideal to be $(k,n)$-absorbing primary.
\begin{theorem}
Assume that $Q$ is a hyperideal of $R$ and $\phi: \mathcal{HI}(R) \longrightarrow \mathcal{HI}(R) \cup \{\varnothing\}$ is a reduction function such that $\phi(Q)$ is a $(k,n)$-absorbing primay huperideal of $R$. If $Q$ is a $\phi$-$(k,n)$-absorbing primary hyperideal of $R$, then $Q$ is a $(k,n)$-absorbing primary hyperideal of $R$.
\end{theorem}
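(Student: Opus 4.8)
The plan is to verify the defining condition of a $(k,n)$-absorbing primary hyperideal for $Q$ directly, by a two-case split according to whether the given $g$-product lies in $\phi(Q)$ or not. First I would observe that $Q$ is proper, since it is assumed to be $\phi$-$(k,n)$-absorbing primary, so it suffices to take arbitrary $r_1^{kn-k+1} \in R$ with $g(r_1^{kn-k+1}) \in Q$ and produce the absorbing-primary conclusion, namely that $g(r_1^{(k-1)n-k+2}) \in Q$ or that a $g$-product of $(k-1)n-k+2$ of the $r_i^,$s, except $g(r_1^{(k-1)n-k+2})$, lies in ${\bf r^{(m,n)}}(Q)$.

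In the first case, suppose $g(r_1^{kn-k+1}) \notin \phi(Q)$, that is, $g(r_1^{kn-k+1}) \in Q-\phi(Q)$. Then the hypothesis that $Q$ is $\phi$-$(k,n)$-absorbing primary applies verbatim and delivers exactly the desired conclusion. In the second case, suppose $g(r_1^{kn-k+1}) \in \phi(Q)$. Here I would invoke the hypothesis that $\phi(Q)$ is itself a $(k,n)$-absorbing primary hyperideal, which yields either $g(r_1^{(k-1)n-k+2}) \in \phi(Q)$ or a $g$-product of $(k-1)n-k+2$ of the $r_i^,$s, except $g(r_1^{(k-1)n-k+2})$, lying in ${\bf r^{(m,n)}}(\phi(Q))$. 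To transfer this back to $Q$, I would use that $\phi$ is a reduction function, so that $\phi(Q) \subseteq Q$: in the first subcase this gives $g(r_1^{(k-1)n-k+2}) \in \phi(Q) \subseteq Q$, and in the second subcase the monotonicity ${\bf r^{(m,n)}}(\phi(Q)) \subseteq {\bf r^{(m,n)}}(Q)$ places the relevant $g$-product in ${\bf r^{(m,n)}}(Q)$.

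Since both cases deliver the defining condition, $Q$ is a $(k,n)$-absorbing primary hyperideal, as claimed. The only step that requires a small independent argument rather than a direct appeal to the hypotheses is the radical inclusion ${\bf r^{(m,n)}}(\phi(Q)) \subseteq {\bf r^{(m,n)}}(Q)$; this is immediate from the characterization of the radical as the intersection of all prime hyperideals containing the hyperideal, since every prime hyperideal containing $Q$ also contains $\phi(Q)$, so the intersection defining ${\bf r^{(m,n)}}(\phi(Q))$ is taken over a larger family. I do not anticipate any genuine obstacle beyond keeping the two reduction-function properties $\phi(Q)\subseteq Q$ and radical monotonicity clearly separated.
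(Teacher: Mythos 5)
Your proposal is correct and matches the paper's own proof in essence: both split on whether $g(r_1^{kn-k+1})$ lies in $\phi(Q)$, apply the $\phi$-$(k,n)$-absorbing primary hypothesis in one case and the $(k,n)$-absorbing primary hypothesis on $\phi(Q)$ in the other, and transfer back to $Q$ via $\phi(Q)\subseteq Q$ and ${\bf r^{(m,n)}}(\phi(Q))\subseteq {\bf r^{(m,n)}}(Q)$. The only cosmetic difference is that the paper first assumes $g(r_1^{(k-1)n-k+2})\notin Q$ and derives the radical disjunct, whereas you carry the full disjunction through both cases; the content is identical.
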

\begin{proof}
Let $r_1^{kn-k+1} \in R$ such that $g(r_1^{kn-k+1}) \in Q$ such that $g(r_1^{(k-1)n-k+2}) \notin Q$. Assume that $g(r_1^{kn-k+1}) \in \phi(Q)$. Since $\phi(Q)$ is a $(k,n)$-absorbing primay huperideal of $R$ and $g(r_1^{(k-1)n-k+2}) \notin \phi(Q)$, we conclude that a $g$-product of $(k-1)n-k+2$ of the $r_i^,$s, except $g(r_1^{(k-1)n-k+2})$ is in ${\bf r^{(m,n)}}(\phi(Q)) \subseteq {\bf r^{(m,n)}}(Q)$, as needed. Suppose that  $g(r_1^{kn-k+1}) \notin \phi(Q)$. Since $Q$ is a $\phi$-$(k,n)$-absorbing primary hyperideal of $R$, we are done.
\end{proof}
In the following, the relationship between a $\phi$-$(k,n)$-absorbing primary hyperideal of $R$ and its radical is considered.
\begin{theorem}
Let $Q$ be a hyperideal of $R$ and $\phi: \mathcal{HI}(R) \longrightarrow \mathcal{HI}(R) \cup \{\varnothing\}$ be a  function such that ${\bf r^{(m,n)}}(\phi(Q))=\phi({\bf r^{(m,n)}}(Q))$. If $Q$ is a $\phi$-$(k,n)$-absorbing primary hyperideal of $R$, then ${\bf r^{(m,n)}}(Q)$ is a $\phi$-$(k,n)$-absorbing  hyperideal of $R$.
\end{theorem}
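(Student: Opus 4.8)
The plan is to verify the defining condition for ${\bf r^{(m,n)}}(Q)$ directly. Suppose $g(r_1^{kn-k+1}) \in {\bf r^{(m,n)}}(Q) - \phi({\bf r^{(m,n)}}(Q))$ for some $r_1^{kn-k+1} \in R$; I must produce $(k-1)n-k+2$ of the $r_i^,$s whose $g$-product lies in ${\bf r^{(m,n)}}(Q)$. Writing $a = g(r_1^{kn-k+1})$, the membership $a \in {\bf r^{(m,n)}}(Q)$ gives, via the radical characterization (Theorem 4.23 in \cite{sorc1}), a positive integer $t$ for which the $t$-th $g$-power $b$ of $a$ lies in $Q$ (that is, $b = g(a^{(t)},1_R^{(n-t)})$ when $t \leq n$, or $b = g_{(l)}(a^{(t)})$ when $t = l(n-1)+1$).

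The key manipulation is to regroup this power. Since $(R,g)$ is a commutative $n$-ary semigroup and $1_R$ is a scalar identity (so padding with $1_R$ leaves values unchanged), $b$ can be rewritten as $g(s_1^{kn-k+1})$, where each $s_i$ denotes the $t$-th $g$-power of $r_i$; concretely, the product of $t$ copies of each $r_i$ may be collected factor-by-factor. Thus $g(s_1^{kn-k+1}) = b \in Q$. Before invoking the primary hypothesis I would first check that $b \notin \phi(Q)$: if $b \in \phi(Q)$, then $a$ has a $g$-power in $\phi(Q)$, whence $a \in {\bf r^{(m,n)}}(\phi(Q)) = \phi({\bf r^{(m,n)}}(Q))$ by hypothesis, contradicting $a \notin \phi({\bf r^{(m,n)}}(Q))$. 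Hence $g(s_1^{kn-k+1}) \in Q - \phi(Q)$.

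Now, since $Q$ is $\phi$-$(k,n)$-absorbing primary, either $g(s_1^{(k-1)n-k+2}) \in Q$ or some $g$-product of $(k-1)n-k+2$ of the $s_i^,$s (other than $g(s_1^{(k-1)n-k+2})$) lies in ${\bf r^{(m,n)}}(Q)$; in both branches some product of $(k-1)n-k+2$ of the $s_i^,$s lies in ${\bf r^{(m,n)}}(Q)$, using $Q \subseteq {\bf r^{(m,n)}}(Q)$. Finally I would descend back to the $r_i^,$s: such a product of the $s_{i_j}$'s is exactly the $t$-th $g$-power of the corresponding product $c = g(r_{i_1}, \ldots, r_{i_{(k-1)n-k+2}})$, again by commutativity and associativity. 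So $c$ has a $g$-power in ${\bf r^{(m,n)}}(Q)$, and since $\mathbf{r}^{(m,n)}(\mathbf{r}^{(m,n)}(Q)) = {\bf r^{(m,n)}}(Q)$ (the radical is an intersection of prime hyperideals, hence is radical), this forces $c \in {\bf r^{(m,n)}}(Q)$. As $c$ is a $g$-product of $(k-1)n-k+2$ of the $r_i^,$s, this completes the verification.

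I expect the main obstacle to be the bookkeeping around the $(m,n)$-ary powers: making the two regrouping steps rigorous (turning $b$ into $g(s_1^{kn-k+1})$, and turning the resulting $s$-product back into a $t$-th $g$-power of the $c$-product), since the radical characterization splits into the cases $t \leq n$ and $t = l(n-1)+1$, and one must track arities and pad with $1_R$ carefully to keep each $s_i$ a genuine element of $R$ and each rewriting a legitimate consequence of the $n$-ary associativity and commutativity of $g$.
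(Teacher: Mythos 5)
Your proof is correct and follows essentially the same route as the paper's: extract a $g$-power of $g(r_1^{kn-k+1})$ lying in $Q$, regroup it as a product of the powers $s_i$, use ${\bf r^{(m,n)}}(\phi(Q))=\phi({\bf r^{(m,n)}}(Q))$ to see that this element avoids $\phi(Q)$, apply the $\phi$-$(k,n)$-absorbing primary hypothesis, and descend back to the $r_i^,$s through the radical. If anything, your write-up is tidier: the paper argues by assuming the non-leading products lie outside ${\bf r^{(m,n)}}(Q)$ and silently omits the descent step (a power of $c$ in ${\bf r^{(m,n)}}(Q)$ forces $c \in {\bf r^{(m,n)}}(Q)$, via ${\bf r^{(m,n)}}({\bf r^{(m,n)}}(Q))={\bf r^{(m,n)}}(Q)$) needed to rule out the other branches, which you treat explicitly and uniformly.
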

\begin{proof}
Let $r_1^{kn-k+1} \in R$ such that $g(r_1^{kn-k+1}) \in {\bf r^{(m,n)}}(Q)-\phi({\bf r^{(m,n)}}(Q))$. Assume that all products of $(k-1)n-k+2$ of the $r_i^,$s except $g(r_1^{(k-1)n-k+2})$ are not in ${\bf r^{(m,n)}}(Q)$. Since $g(r_1^{kn-k+1}) \in {\bf r^{(m,n)}}(Q)$, then there exist $s \in \mathbb{Z}^+$ with $g(g(r_1^{kn-k+1})^{(s)},1^{(n-s)}) \in Q$, for $s \leq n$ or $g_{(l)}(g(r_1^{kn-k+1})^{(s)}) \in Q$, for $s>n$, $s=l(n-1)+1$. In the former case, we get $g(g(r_1)^{(s)},g(r_2)^{(s)},\cdots,g(r_{kn-k+1})^{(s)},1^{(n-s)}) \in Q$. If $g(g(r_1)^{(s)},g(r_2)^{(s)},\cdots,g(r_{kn-k+1})^{(s)},1^{(n-s)}) \in \phi(Q)$, we obtain $g(r_1^{kn-k+1}) \in {\bf r^{(m,n)}}(\phi(Q))=\phi({\bf r^{(m,n)}}(Q))$, a contradiction.  Since $Q$ is a $\phi$-$(k,n)$-absorbing primary hyperideal of $R$, then $g(g(r_1)^{(s)},g(r_2)^{(s)},\cdots,g(r_{(k-1)n-k+2})^{(s)}),1^{(n-s)})=g(g(r_1^{(k-1)n-k+2)})^{(s)},1^{(n-s)}) \in Q$ which means $g(r_1^{(k-1)n-k+2}) \in {\bf r^{(m,n)}}(Q)$. For the other case, we have a similar argument. Consequently, ${\bf r^{(m,n)}}(Q)$ is a $\phi$-$(k,n)$-absorbing  hyperideal of $R$.
\end{proof}
\begin{theorem}
Assume that $\phi: \mathcal{HI}(R) \longrightarrow \mathcal{HI}(R) \cup \{\varnothing\}$ is a function. If $Q$ is a $\phi$-$(k,n)$-absorbing primary hyperideal of $R$, then $Q$ is a $\phi$-$(s,n)$-absorbing primary hyperideal for all $s \geq k$.
\end{theorem}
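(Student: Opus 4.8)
The plan is to mirror the inductive argument that established Theorem~\ref{inclu}, while carrying the radical ${\bf r^{(m,n)}}(Q)$ along at every step. Since the conclusion is wanted for every $s \geq k$, it suffices to prove the single implication that a $\phi$-$(s,n)$-absorbing primary hyperideal is $\phi$-$(s+1,n)$-absorbing primary; iterating this starting at $s=k$ then gives the full statement. So I would fix elements $r_1^{(s+1)n-(s+1)+1} \in R$ with $g(r_1^{(s+1)n-(s+1)+1}) \in Q-\phi(Q)$ and collapse the first $n$ of them into the single element $a:=g(r_1^{n})$. Then $a, r_{n+1}, \ldots, r_{(s+1)n-(s+1)+1}$ is a list of $sn-s+1$ elements of $R$ whose iterated $g$-product equals $g(r_1^{(s+1)n-(s+1)+1})$, and hence lies in $Q-\phi(Q)$. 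This is the regrouping that reduces the $(s+1,n)$-data to $(s,n)$-data.

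Next I would apply the $\phi$-$(s,n)$-absorbing primary hypothesis to these $sn-s+1$ effective elements, which yields two alternatives. In the first, the $g$-product of the initial $(s-1)n-s+2$ of them lies in $Q$; since that initial block consists of $a$ together with $r_{n+1},\ldots,r_{sn-s+1}$, unwinding $a=g(r_1^{n})$ shows this product is exactly $g(r_1^{sn-s+1}) \in Q$, which is precisely the first alternative demanded for $\phi$-$(s+1,n)$-absorbing primary. The second alternative produces a selection $S$ of $(s-1)n-s+2$ of the effective elements, distinct from the initial block, whose $g$-product lies in ${\bf r^{(m,n)}}(Q)$; this is the case I would translate back to the original $r_i$'s.

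To finish the second alternative, replacing $a$ (when it occurs in $S$) by $r_1^{n}$ turns $S$ into a set $T$ of original elements with $g(T) \in {\bf r^{(m,n)}}(Q)$, where $|T| = sn-s+1$ if $a \in S$ and $|T| = (s-1)n-s+2$ otherwise. If $a \in S$, the condition $S \neq$ initial block forces $S$ to contain an effective term indexed beyond the block, hence $T$ contains some $r_j$ with $j>sn-s+1$, so $T \neq \{r_1,\ldots,r_{sn-s+1}\}$ and we already have a product of $sn-s+1$ original elements (other than the excluded block) in the radical. If $a \notin S$, then $|T|=(s-1)n-s+2$ exceeds the number $(s-1)(n-1)$ of available ``middle'' indices $n+1,\ldots,sn-s+1$, so $T$ must already contain a tail index $j>sn-s+1$; multiplying $g(T)$ by any $n-1$ further elements keeps the product in the hyperideal ${\bf r^{(m,n)}}(Q)$ and raises the count to $sn-s+1$, still avoiding the excluded block. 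The delicate point I would write out most carefully is exactly this last bookkeeping—guaranteeing that the regrouped product is never forced to coincide with the excluded $g(r_1^{sn-s+1})$—for which the pigeonhole count on the middle indices is the crucial observation, together with the fact that ${\bf r^{(m,n)}}(Q)$, being an intersection of prime hyperideals, absorbs $g$-multiplication.
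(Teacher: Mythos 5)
Your proposal is correct and follows essentially the same route as the paper's own proof: a single-step induction in which the first $n$ entries are collapsed into $a=g(r_1^{n})$, the $\phi$-$(s,n)$-absorbing primary hypothesis is applied to the resulting list of $sn-s+1$ elements, and the fact that ${\bf r^{(m,n)}}(Q)$ is a hyperideal is used to pad the shorter radical-products back up to length $sn-s+1$. If anything, your bookkeeping (the pigeonhole count on the middle indices $n+1,\ldots,sn-s+1$ guaranteeing the padded product avoids the excluded block $g(r_1^{sn-s+1})$) is spelled out more carefully than in the paper, whose printed argument compresses this step and carries index typos such as $g(r_1^{n+2})$.
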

\begin{proof}
Let $Q$ be a $\phi$-$(k,n)$-absorbing primary hyperideal of $R$. Suppose that $g(g(r_1^{n+2}),r_{n+3}^{(k+1)n-(k+1)+1}) \in Q-\phi(Q)$ for some $r_1^{(k+1)n-(k+1)+1} \in R$. Put $g(r_1^{n+2})=a_1$. Then we conclude that $g(a_1,\cdots,r_{(k+1)n-(k+1)+1}) \in Q$ or a $g$-product of $kn-k+1$ of the $r_i^,$s, except $g(a_1,\cdots, r_{(k+1)n-(k+1)+1})$ is in ${\bf r^{(m,n)}}(Q)$ as $Q$ is a $\phi$-$(k,n)$-absorbing primary hyperideal of $R$. Since ${\bf r^{(m,n)}}(Q)$ is a hyperideal of $R$ and $r_1^{n+2} \in R$, we conclude that $g(r_1,r_{n+3},\cdots,r_{(k+1)n-(k+1)+1}) \in {\bf r^{(m,n)}}(Q)$ or $ \cdots$ or $g(r_{n+2},r_{n+3},\cdots,r_{(k+1)n-(k+1)+1}) \in {\bf r^{(m,n)}}(Q)$ and so $Q$ is $(k+1,n)$-absorbing primary.
\end{proof}
\begin{theorem}
Let $\phi_1, \phi_2: \mathcal{HI}(R) \longrightarrow \mathcal{HI}(R) \cup \{\varnothing\}$ be  two  functions such that for all $I \in  \mathcal{HI}(R)$, $\phi_1(I) \subseteq \phi_2(I)$. If $Q$ is a $\phi_1$-$(k,n)$-absorbing primary hyperideal of $R$, then $Q$ is a $\phi_2$-$(k,n)$-absorbing primary hyperideal.
\end{theorem}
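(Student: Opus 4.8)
The plan is to reproduce the argument of Theorem~\ref{zir} almost verbatim, swapping the absorbing conclusion for the absorbing-primary conclusion. The entire content of the statement is the monotonicity of the set difference $Q-\phi(Q)$ in the parameter $\phi$: enlarging $\phi$ only shrinks the set of tuples to which the defining implication must be applied, so the primary conclusion, once guaranteed for $\phi_1$, is automatically available for $\phi_2$.

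First I would fix an arbitrary $(kn-k+1)$-tuple $r_1^{kn-k+1}\in R$ and assume the hypothesis of the $\phi_2$-$(k,n)$-absorbing primary condition, namely $g(r_1^{kn-k+1})\in Q-\phi_2(Q)$. The single observation driving the proof is that the pointwise containment $\phi_1(Q)\subseteq\phi_2(Q)$ forces $Q-\phi_2(Q)\subseteq Q-\phi_1(Q)$, and hence $g(r_1^{kn-k+1})\in Q-\phi_1(Q)$. This elementary inclusion of difference sets is the only set-theoretic step and is worth writing out explicitly, since it is the whole mechanism.

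Next I would invoke the hypothesis that $Q$ is $\phi_1$-$(k,n)$-absorbing primary. Applied to this same tuple (now known to lie in $Q-\phi_1(Q)$), it delivers exactly one of the two required alternatives: either $g(r_1^{(k-1)n-k+2})\in Q$, or some $g$-product of $(k-1)n-k+2$ of the $r_i^,$s other than $g(r_1^{(k-1)n-k+2})$ lies in ${\bf r^{(m,n)}}(Q)$. These are precisely the two conclusions demanded by the $\phi_2$-$(k,n)$-absorbing primary definition, so the verification closes.

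I do not expect any genuine obstacle. The radical ${\bf r^{(m,n)}}(Q)$, the $n$-ary product $g$, and the combinatorial structure of the tuple all play a purely passive role — they are simply carried through from the $\phi_1$-conclusion to the $\phi_2$-conclusion unchanged. No property of $\phi_1$ or $\phi_2$ beyond the hypothesized containment at $Q$ is used, and in particular neither function need be a reduction function. The only care required is to phrase the difference-set inclusion correctly and to note that the two alternatives match the primary definition exactly, after which the proof is a one-line application of the $\phi_1$ hypothesis.
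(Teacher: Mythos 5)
Your proposal is correct and matches the paper's intended argument exactly: the paper proves this statement by deferring to its proof of the non-primary version (the theorem on $\phi_1$-$(k,n)$-absorbing hyperideals), which is precisely the difference-set inclusion $Q-\phi_2(Q)\subseteq Q-\phi_1(Q)$ followed by an application of the $\phi_1$ hypothesis. Your observation that the two alternatives in the primary conclusion carry over unchanged, with no properties of $\phi_1,\phi_2$ needed beyond the containment at $Q$, is exactly the content of the paper's proof.
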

\begin{proof}
It is proved in a similar way to Theorem \ref{zir}.
\end{proof}
\begin{theorem}
Let $\phi: \mathcal{HI}(R) \longrightarrow \mathcal{HI}(R) \cup \{\varnothing\}$ be a function. If $Q$ is a $\phi$-$(1,n)$-absorbing primary hyperideal of $R$, then $Q$ is a $\phi$-$(2,n)$-absorbing primary hyperideal.
\end{theorem}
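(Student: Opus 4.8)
The plan is to reduce the $\phi$-$(2,n)$-absorbing primary test for $Q$ to a single application of the $\phi$-$(1,n)$-absorbing primary hypothesis, by collapsing the first $n$ factors of the test product into one element via associativity of $g$. Suppose $g(r_1^{2n-1}) \in Q - \phi(Q)$ for some $r_1^{2n-1} \in R$. By associativity of the $n$-ary operation $g$ I would rewrite $g(r_1^{2n-1}) = g(g(r_1^n), r_{n+1}^{2n-1})$, so that the $n$ elements $g(r_1^n), r_{n+1}, \ldots, r_{2n-1}$ (that is, one collapsed factor followed by $n-1$ of the original entries) have $g$-product lying in $Q - \phi(Q)$.

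I would then apply the $\phi$-$(1,n)$-absorbing primary property to this length-$n$ sequence, taking $g(r_1^n)$ as the distinguished first entry (no reordering is needed, since $g(r_1^n)$ already occupies the first slot). This yields two cases. If $g(r_1^n) \in Q$, then the first alternative in the definition of $\phi$-$(2,n)$-absorbing primary holds verbatim and we are done. Otherwise the hypothesis forces one of the remaining entries, say $r_\ell$ with $\ell \in \{n+1, \ldots, 2n-1\}$, to lie in ${\bf r^{(m,n)}}(Q)$.

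In this second case the task is to produce a $g$-product of $n$ of the $r_i$'s, different from $g(r_1^n)$, lying in ${\bf r^{(m,n)}}(Q)$. Here I would use that ${\bf r^{(m,n)}}(Q)$ is itself a hyperideal (it is the intersection of the prime hyperideals containing $Q$) and hence absorbs $g$-multiplication. Since $r_\ell \in {\bf r^{(m,n)}}(Q)$, the product $g(r_1^{n-1}, r_\ell)$ lies in ${\bf r^{(m,n)}}(Q)$; as $\ell \geq n+1$ this product involves $r_\ell$ in place of $r_n$, so it is a $g$-product of $n$ of the $r_i$'s that is genuinely distinct from $g(r_1^n)$. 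Thus the second alternative in the definition holds, and in either case $Q$ is $\phi$-$(2,n)$-absorbing primary.

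I expect the only delicate point to be the bookkeeping in the second case: one must check that the exhibited product really is built from exactly $n$ of the original $r_i$'s and is formally different from $g(r_1^n)$, and one must invoke the absorption property of ${\bf r^{(m,n)}}(Q)$ for the $n$-ary $g$ in the correct slot. The genuinely structural ingredients — collapsing the first block via associativity and reading off $g(r_1^n)$ as the distinguished entry — are exactly what make the base step $k=1 \to k=2$ go through directly, which explains why it is isolated from the general inductive step.
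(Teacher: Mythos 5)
Your proposal is correct and follows essentially the same route as the paper's proof: rewrite $g(r_1^{2n-1})=g(g(r_1^n),r_{n+1}^{2n-1})$ by associativity, apply the $\phi$-$(1,n)$-absorbing primary hypothesis with $g(r_1^n)$ as the distinguished entry, and in the radical case use the absorption property of the hyperideal ${\bf r^{(m,n)}}(Q)$ to manufacture an $n$-fold $g$-product distinct from $g(r_1^n)$. The only (immaterial) difference is which product you exhibit in that case --- you take $g(r_1^{n-1},r_\ell)$, while the paper multiplies one of $r_1,\dots,r_n$ into the tail block $r_{n+1},\dots,r_{2n-1}$ --- and your write-up is in fact the more careful of the two, since the paper states the intermediate dichotomy somewhat loosely.
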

\begin{proof}
Let $Q$ be a $\phi$-$(1,n)$-absorbing primary hyperideal and $g(g(r_1^n),\cdots,r_{2n-1}) \in Q-\phi(Q)$ for some $r_1^{2n-1} \in R$. Then we get $g(r_1^n) \in Q$ or $g(r_{n+1}^{2n-1}) \in {\bf r^{(m,n)}}(Q)$.  By definition of hyperideal, we conclude that  $g(r_1,r_{n+1},\cdots,r_{2n-1}) \in  {\bf r^{(m,n)}}(Q)$ or $\cdots$ or $g(r_1,r_{n+1},\cdots,r_{2n-1}) \in {\bf r^{(m,n)}}(Q)$ since $r_1^n \in R$. Consequently, $Q$ is a $\phi$-$(2,n)$-absorbing primary hyperideal of $R$.
\end{proof}
Let $Q$ be a proper hyperideal of $R$ and $\phi: \mathcal{HI}(R) \longrightarrow \mathcal{HI}(R) \cup \{\varnothing\}$ be a function. $Q$ refers to a strongly $\phi$-$(k,n)$-absorbing primary hyperideal of $R$ if $g(Q_1^{kn-k+1}) \subseteq Q-\phi(Q)$ for some hyperideals $Q_1^{kn-k+1}$ of $R$ implies that $g(Q_1^{(k-1)n-k+2}) \subseteq Q$ or a $g$-product of $(k-1)n-k+2$ of $Q_i^,$s, except $g(Q_1^{(k-1)n-k+2}),$ is a subset of ${\bf r^{(m,n)}}(Q)$. In the sequel, we assume that all $\phi$-$(k,n)$-absorbing primary hyperideals of $R$ are strongly $\phi$-$(k,n)$-absorbing primary hyperideals.
Recall from \cite{weak} that a proper hyperideal $Q$ of $R$ is called  weakly $(k,n)$-absorbing primary if   $0 \neq g(r_1^{kn-k+1}) \in Q$ for $r_1^{kn-k+1} \in R$ implies that  $g(r_1^{(k-1)n-k+2}) \in Q$ or a $g$-product  of $(k-1)n-k+2$ of  $r_i^,$s ,except $g(r_1^{(k-1)n-k+2})$, is in ${\bf r^{(m,n)}}(Q)$.
\begin{theorem} \label{weak}
Suppose that $Q$ is a proper hyperideal of a commutative Krasner $(m,2)$-hyperring $R$ and $\phi: \mathcal{HI}(R) \longrightarrow \mathcal{HI}(R) \cup \{\varnothing\}$ is a function. Then the followings are equivalent:

\begin{itemize} 
\item[\rm(1)]~ $Q$  is   a $\phi$-$(2,2)$-absorbing primary hyperideal of $R$.  
\item[\rm(2)]~ $Q/\phi(Q)$ is a weakly $(2,2)$-absorbing primary hyperideal of $R/\phi(Q)$.



\end{itemize}
\end{theorem}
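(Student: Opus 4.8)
The plan is to transfer the defining condition across the canonical projection $\pi : R \longrightarrow R/\phi(Q)$ given by $\pi(r) = f(r,\phi(Q),0^{(m-2)})$, which is a homomorphism by Theorem 3.2 of \cite{sorc1}. As the very formation of $R/\phi(Q)$ presupposes, I take $\phi(Q)$ to be a hyperideal with $\phi(Q) \subseteq Q$, so that $Q/\phi(Q)$ is a proper hyperideal of $R/\phi(Q)$. Three facts drive the argument: (a) for $a \in R$, $\pi(a) \in Q/\phi(Q)$ if and only if $a \in Q$; (b) the zero of $R/\phi(Q)$ is the coset $\phi(Q)$, so $\pi(a) \neq \bar{0}$ precisely when $a \notin \phi(Q)$; and (c) since every prime hyperideal containing $Q$ contains $\phi(Q)$, the prime-correspondence under $\pi$ gives ${\bf r^{(m,n)}}(Q/\phi(Q)) = {\bf r^{(m,n)}}(Q)/\phi(Q)$, i.e. $\pi(a) \in {\bf r^{(m,n)}}(Q/\phi(Q))$ if and only if $a \in {\bf r^{(m,n)}}(Q)$.

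Specializing to $(k,n)=(2,2)$, where $kn-k+1=3$ and $(k-1)n-k+2=2$, condition (1) reads: $g(r_1,r_2,r_3) \in Q-\phi(Q)$ forces $g(r_1,r_2) \in Q$, $g(r_1,r_3) \in {\bf r^{(m,n)}}(Q)$, or $g(r_2,r_3) \in {\bf r^{(m,n)}}(Q)$; and condition (2) reads: $\bar 0 \neq g(\bar r_1,\bar r_2,\bar r_3) \in Q/\phi(Q)$ forces $g(\bar r_1,\bar r_2) \in Q/\phi(Q)$, $g(\bar r_1,\bar r_3) \in {\bf r^{(m,n)}}(Q/\phi(Q))$, or $g(\bar r_2,\bar r_3) \in {\bf r^{(m,n)}}(Q/\phi(Q))$. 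Because $\pi$ is a homomorphism, each $g$-product of cosets equals the image under $\pi$ of the corresponding $g$-product in $R$; hence facts (a)--(c) translate the hypothesis and all three alternative conclusions of one definition into those of the other, term by term.

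For $(1)\Rightarrow(2)$ I would begin with $\bar 0 \neq g(\bar r_1,\bar r_2,\bar r_3) \in Q/\phi(Q)$; facts (a) and (b) yield $g(r_1,r_2,r_3) \in Q-\phi(Q)$, so (1) produces one of the three conclusions in $R$, which facts (a) and (c) push forward into $R/\phi(Q)$. The implication $(2)\Rightarrow(1)$ runs the same equivalences in reverse: from $g(r_1,r_2,r_3) \in Q-\phi(Q)$ apply $\pi$ to meet the weakly-absorbing-primary hypothesis, invoke (2), and pull the resulting alternatives back. Since every arrow used is an ``if and only if'', the two directions are genuinely symmetric and no case is lost.

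The step I expect to be the main obstacle is establishing facts (a) and (c) with full rigour, since cosets in a Krasner hyperring are sets rather than points: one must verify that $f(a,\phi(Q),0^{(m-2)})$ meets $Q/\phi(Q)$ exactly when $a \in Q$, using reproducibility of the canonical $m$-ary hypergroup $(R,f)$ together with $\phi(Q) \subseteq Q$, and correspondingly that the prime hyperideals of $R/\phi(Q)$ are precisely the images of those of $R$ containing $\phi(Q)$. Once these structural facts are secured, the equivalence of (1) and (2) is a direct, symmetric translation requiring no further computation.
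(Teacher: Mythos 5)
Your proposal is correct and follows essentially the same route as the paper: both directions are proved by translating the defining conditions across the quotient $R/\phi(Q)$ via the projection $\pi(r)=f(r,\phi(Q),0^{(m-2)})$, using exactly your facts that products of cosets are cosets of products, that $\pi(a)\in Q/\phi(Q)$ iff $a\in Q$, that $\pi(a)=\bar{0}$ iff $a\in\phi(Q)$, and that ${\bf r^{(m,n)}}(Q/\phi(Q))={\bf r^{(m,n)}}(Q)/\phi(Q)$. The only notable (but minor) difference is in $(1)\Rightarrow(2)$: the paper manipulates arbitrary cosets $f(a_1^{i-1},\phi(Q),a_{i+1}^{m})$ directly and implicitly relies on its standing convention that $\phi$-$(k,n)$-absorbing primary hyperideals are \emph{strongly} so, whereas you reduce to representatives via surjectivity of $\pi$, so the plain element-wise definition suffices.
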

\begin{proof}
$(1) \Longrightarrow (2) $ Let  $Q$  be  $\phi$-$(2,2)$-absorbing primary  and   for $a_{11}^{1m},a_{21}^{2m},a_{31}^{3m}\in R$, 

$\hspace{1cm}\phi(Q) \neq g(f(a_{11}^{1(i-1)},\phi(Q),a_{1(i+1)}^{1m}),f(a_{21}^{2(i-1)},\phi(Q),a_{2(i+1)}^{2m}),$ 

$\hspace{2.5cm}f(a_{31}^{3(i-1)},\phi(Q),a_{3(i+1)}^{3m}))$

$\hspace{1.8cm}=f(g(a_{11}^{31}),\cdots,g(a_{1(i-1)}^{3(i-1)}),\phi(Q),g(a_{1(i+1)}^{3(i+1)}),\cdots,g(a_{1m}^{3m}))$

$\hspace{1.8cm} \in Q/\phi(Q)$. \\
Then  

$\hspace{1cm} f(g(a_{11}^{31}),\cdots, g(a_{1(i-1)}^{3(i-1)}),0,g(a_{1(i+1)}^{3(i+1)}),\cdots,g(a_{1m}^{3m}))$  

$\hspace{1cm}=g(f(a_{11}^{1(i-1)},0,a_{1(i+1)}^{1m}),f(a_{21}^{2(i-1)},0,a_{2(i+1)}^{2m}),f(a_{31}^{3(i-1)},0,a_{3(i+1)}^{3m}))$

$\hspace{1cm}\in Q-\phi(Q).$\\
Since $Q$ is a  $\phi$-$(2,2)$-absorbing primary hyperideal of $R$, we get

 $\hspace{1cm}g(f(a_{11}^{1(i-1)},0,a_{1(i+1)}^{1m}),f(a_{21}^{2(i-1)},0,a_{2(i+1)}^{2m}))$

$ \hspace{1.5cm}=f(g(a_{11}^{21}),\cdots,g(a_{1(i-1)}^{2(i-1)}),0,g(a_{1(i+1)}^{2(i+1)}),\cdots,g(a_{1m}^{2m}))\subseteq Q$\\ or 

$\hspace{1cm}g(f(a_{21}^{2(i-1)},0,a_{2(i+1)}^{2m}),f(a_{31}^{3(i-1)},0,a_{3(i+1)}^{3m}))$

 $\hspace{1.5cm}=f(g(a_{21}^{31}),\cdots,g(a_{2(i-1)}^{3(i-1)}),0,g(a_{2(i+1)}^{3(i+1)}),\cdots,g(a_{2m}^{3m}))\subseteq {\bf r^{(m,n)}}(Q)$\\
or

$\hspace{1cm}g(f(a_{11}^{1(i-1)},0,a_{1(i+1)}^{1m}),f(a_{31}^{3(i-1)},0,a_{3(i+1)}^{3m}))$

$\hspace{1.5cm}=f(g(a_{11}^{31}),\cdots,g(a_{1(i-1)}^{3(i-1)}),0,g(a_{1(i+1)}^{3(i+1)}),\cdots,g(a_{1m}^{3m}))\subseteq {\bf r^{(m,n)}}(Q)$.\\ 
It implies that 

$\hspace{1cm}f(g(a_{11}^{21}),\cdots,g(a_{1(i-1)}^{2(i-1)}),\phi(Q),g(a_{1(i+1)}^{2(i+1)}),\cdots,g(a_{1m}^{2m}))$

$\hspace{1.5cm}=g(f(a_{11}^{1(i-1)},\phi(Q),a_{1(i+1)}^{1m}),f(a_{21}^{2(i-1)},\phi(Q),a_{2(i+1)}^{2m}))\in Q/\phi(Q)$\\ or  

$\hspace{1cm}f(g(a_{21}^{31}),\cdots,g(a_{2(i-1)}^{3(i-1)}),\phi(Q),g(a_{2(i+1)}^{3(i+1)}),\cdots,g(a_{2m}^{3m}))$

$\hspace{1.5cm}=g(f(a_{21}^{2(i-1)},\phi(Q),a_{2(i+1)}^{2m}),f(a_{31}^{3(i-1)},\phi(Q),a_{3(i+1)}^{3m}))$

$\hspace{1.5cm}\in {\bf r^{(m,n)}}(Q)/\phi(Q)={\bf r^{(m,n)}}(Q/\phi(Q))$\\
or 

$\hspace{1cm}f(g(a_{11}^{31}),\cdots,g(a_{1(i-1)}^{3(i-1)}),\phi(Q),g(a_{1(i+1)}^{3(i+1)}),\cdots,g(a_{1m}^{3m}))$

$\hspace{1.5cm}=g(f(a_{11}^{1(i-1)},\phi(Q),a_{1(i+1)}^{1m}),f(a_{31}^{3(i-1)},\phi(Q),a_{3(i+1)}^{3m}))$

$ \hspace{1.5cm} \in {\bf r^{(m,n)}}(Q)/\phi(Q)={\bf r^{(m,n)}}(Q/\phi(Q))$.

$(2) \Longrightarrow (1) $ Let $g(r_1^3) \in Q-\phi(Q)$ for some $r_1^3 \in R$. Therefore we obtain $f(g(r_1^3),\phi(Q),0^{(m-2)}) \neq \phi(Q)$. It follows that 

 $\phi(Q) \neq g(f(r_1,\phi(Q),0^{(m-2)}),f(r_2,\phi(Q),0^{(m-2)}),f(r_3,\phi(Q),0^{(m-2)})) \in Q/\phi(Q)$. \\
 By the hypothesis, we get 
 
 $g(f(r_1,\phi(Q),0^{(m-2)}),f(r_2,\phi(Q),0^{(m-2)}))=f(g(r_1^2),\phi(Q),0^{(m-2)}) \in Q/\phi(Q)$.\\
 or 
 
 $g(f(r_2,\phi(Q),0^{(m-2)}),f(r_3,\phi(Q),0^{(m-2)}))  =f(g(r_2^3),\phi(Q),0^{(m-2)})$
 
 $\hspace{6.5cm} \in {\bf r^{(m,n)}}(Q)/\phi(Q)$.\\
 or 
 
 $g(f(r_1,\phi(Q),0^{(m-2)}),f(r_3,\phi(Q),0^{(m-2)})) =f(g(r_1^3),\phi(Q),0^{(m-2)})$
 
 $\hspace{6.5cm}  \in {\bf r^{(m,n)}}(Q)/\phi(Q)$.\\
 This shows that $g(r_1^2) \in Q$ or $g(r_2^3) \in {\bf r^{(m,n)}}(Q)$ or $g(r_1^3) \in {\bf r^{(m,n)}}(Q)$. Consequently, $Q$  is   a $\phi$-$(2,2)$-absorbing primary hyperideal of $R$. 
\end{proof}
Suppose that  $I$ is a weakly $(2,2)$-absorbing primary hyperideal of a commutative Krasner $(m,2)$-hyperring $R$. Recall from \cite{weak} that $(x,y,z)$ is said to be $(2,2)$-zero primary of $I$ for $x,y,z \in R$, if $g(x,y,z)=0$, $g(x,y) \notin I$, $g(y,z) \notin  {\bf r^{(m,n)}}(I)$ and $g(x,z) \notin  {\bf r^{(m,n)}}(I)$. 
Now, assume that $Q$ is a $\phi$-$(2,2)$-absorbing primary hyperideal of a commutative Krasner $(m,2)$-hyperring $R$. Then we say $(x,y,z)$ is a $\phi$-$(2,2)$ primary  of $Q$ for some $x,y, z \in R$ if  $g(x,y,z) \in \phi(Q)$, $g(x,y) \notin Q$, $g(y,z) \notin {\bf r^{(m,n)}}(Q)$ and $g(x,z) \notin {\bf r^{(m,n)}}(Q)$. It is easy
to see that  a proper hyperideal $Q$ of $R$ is $\phi$-$(2,2)$-absorbing primary that is not $(2,2)$-absorbing primary if and only if $Q$ has a $\phi$-$(2,2)$ $(x,y,z)$ for some $x,y,z \in R$.
\begin{theorem} \label{weak2}
Let $R$ be a commutative Krasner $(m,2)$-hyperring  and let $\phi: \mathcal{HI}(R) \longrightarrow \mathcal{HI}(R) \cup \{\varnothing\}$ be a function. Let $Q$ be a $\phi$-$(2,2)$-absorbing primary hyperideal of $R$ and   $x,y,z \in R$. Then the followings are equivalent:
\begin{itemize} 
\item[\rm(1)]~ $(x,y,z)$ is a $\phi$-$(2,2)$ primary of $Q$.
\item[\rm(2)]~$(f(x,\phi(Q),0^{(m-2)}),f(y,\phi(Q),0^{(m-2)}),f(z,\phi(Q),0^{(m-2)})$ is a $(2,2)$-zero primary of $Q/\phi(Q)$.
\end{itemize}
\end{theorem}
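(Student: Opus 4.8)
The plan is to push the four defining conditions of a $\phi$-$(2,2)$ primary triple through the canonical projection $\pi: R \longrightarrow R/\phi(Q)$ given by $\pi(r)=f(r,\phi(Q),0^{(m-2)})$, and to recognize on the other side exactly the four conditions defining a $(2,2)$-zero primary triple of $Q/\phi(Q)$. Throughout I write $\bar a=\pi(a)=f(a,\phi(Q),0^{(m-2)})$; here $\phi(Q)$ is a hyperideal with $\phi(Q)\subseteq Q$, so that the quotient is defined and $\bar 0=\phi(Q)$ is its zero element.

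First I would record the three translation facts I need. Since $\pi$ is a homomorphism (Theorem 3.2 in \cite{sorc1}), products commute with the bar: $g(\bar x,\bar y)=\overline{g(x,y)}$ and $g(\bar x,\bar y,\bar z)=\overline{g(x,y,z)}$, the first being forced by distributivity via $g(\bar x,\bar y)=f(g(x,y),\phi(Q),0^{(m-2)})$. Then: (i) $\overline{g(x,y,z)}=\bar 0$ if and only if $g(x,y,z)\in Ker(\pi)=\phi(Q)$; (ii) because $\phi(Q)\subseteq Q$ one has $\pi^{-1}(Q/\phi(Q))=Q$, so $\overline{g(x,y)}\in Q/\phi(Q)$ if and only if $g(x,y)\in Q$; (iii) using the radical correspondence ${\bf r^{(m,n)}}(Q/\phi(Q))={\bf r^{(m,n)}}(Q)/\phi(Q)$ already invoked in the proof of Theorem \ref{weak}, we get $\overline{g(y,z)}\in {\bf r^{(m,n)}}(Q/\phi(Q))$ if and only if $g(y,z)\in {\bf r^{(m,n)}}(Q)$, and likewise for $g(x,z)$.

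With these in hand the equivalence is a direct dictionary translation. The triple $(x,y,z)$ being a $\phi$-$(2,2)$ primary of $Q$ means precisely $g(x,y,z)\in\phi(Q)$, $g(x,y)\notin Q$, $g(y,z)\notin {\bf r^{(m,n)}}(Q)$ and $g(x,z)\notin {\bf r^{(m,n)}}(Q)$. Applying (i) turns the first condition into $g(\bar x,\bar y,\bar z)=\bar 0$; applying (ii) turns the second into $g(\bar x,\bar y)\notin Q/\phi(Q)$; and applying (iii) turns the last two into $g(\bar y,\bar z)\notin {\bf r^{(m,n)}}(Q/\phi(Q))$ and $g(\bar x,\bar z)\notin {\bf r^{(m,n)}}(Q/\phi(Q))$. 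These four statements are exactly the definition of $(\bar x,\bar y,\bar z)$ being a $(2,2)$-zero primary of $Q/\phi(Q)$, so $(1)\Leftrightarrow(2)$; since each translation fact is itself an ``if and only if'', both implications follow simultaneously.

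The only non-formal ingredient is the radical correspondence used in (iii); everything else reduces to the homomorphism property of $\pi$ together with $\phi(Q)\subseteq Q$. I therefore expect no genuine obstacle. The one point worth flagging is that for the very notion of a $(2,2)$-zero primary triple to apply to $Q/\phi(Q)$ one wants $Q/\phi(Q)$ to be a weakly $(2,2)$-absorbing primary hyperideal, which is exactly the content of Theorem \ref{weak} once $Q$ is assumed $\phi$-$(2,2)$-absorbing primary; with this secured, the proof is the routine verification sketched above.
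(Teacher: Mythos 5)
Your proposal is correct and follows essentially the same route as the paper's own proof: both translate the four defining conditions of a $\phi$-$(2,2)$ primary triple through the projection $r \mapsto f(r,\phi(Q),0^{(m-2)})$, using the membership correspondence between $Q$ and $Q/\phi(Q)$, the radical correspondence ${\bf r^{(m,n)}}(Q)/\phi(Q)={\bf r^{(m,n)}}(Q/\phi(Q))$, and Theorem \ref{weak} to guarantee that $Q/\phi(Q)$ is weakly $(2,2)$-absorbing primary so that the notion of $(2,2)$-zero primary applies. The only difference is presentational: you isolate the translation facts as explicit preliminary observations, whereas the paper carries out the same dictionary inline in each direction.
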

\begin{proof}
$(1) \Longrightarrow (2)$ Let $(x,y,z)$ be a $\phi$-$(2,2)$ primary of $Q$. This means that $g(x,y,z) \in \phi(Q)$, $g(x,y) \notin Q$, $g(y,z) \notin {\bf r^{(m,n)}}(Q)$ and $g(x,z) \notin {\bf r^{(m,n)}}(Q)$. This implies that $f(g(x,y),Q,0^{(m-2)}) \notin Q/\phi(Q)$, $f(g(y,z),\phi(Q),0^{(m-2)}) \notin {\bf r^{(m,n)}}(Q)/\phi(Q)$ and $f(g(x,z),\phi(Q),0^{(m-2)}) \notin {\bf r^{(m,n)}}(Q)/\phi(Q)$. By Theorem \ref{weak}, we conclude that $(f(x,\phi(Q),0^{(m-2)}),f(y,\phi(Q),0^{(m-2)}),f(z,\phi(Q),0^{(m-2)})$ is a $(2,2)$-zero primary of $Q/\phi(Q)$.

$(2) \Longrightarrow (1)$ Assume that $(f(x,\phi(Q),0^{(m-2)}),f(y,\phi(Q),0^{(m-2)}),f(z,\phi(Q),0^{(m-2)})$ is a $(2,2)$-zero primary of $Q/\phi(Q)$. Thus $g(x,y,z) \in \phi(Q)$ but $f(g(x,y),Q,0^{(m-2)}) \notin Q/\phi(Q)$, $f(g(y,z),\phi(Q),0^{(m-2)}) \notin {\bf r^{(m,n)}}(Q)/\phi(Q)$ and $f(g(x,z),\phi(Q),0^{(m-2)}) \notin {\bf r^{(m,n)}}(Q)/\phi(Q)$. Hence $g(x,y,z) \in \phi(Q)$, $g(x,y) \notin Q$, $g(y,z) \notin {\bf r^{(m,n)}}(Q)$ and $g(x,z) \notin {\bf r^{(m,n)}}(Q)$. It implies that $(x,y,z)$ is a $\phi$-$(2,2)$ primary of $Q$.
\end{proof}

\begin{theorem} \label{weak3}
Let $R$ be a commutative Krasner $(m,2)$-hyperring  and let $\phi: \mathcal{HI}(R) \longrightarrow \mathcal{HI}(R) \cup \{\varnothing\}$ be a function. Let $Q$ be a $\phi$-$(2,2)$-absorbing primary hyperideal of $R$. If $(x,y,z)$ is a $\phi$-$(2,2)$ primary of $Q$ for some $x,y,z \in R$, then
\begin{itemize} 
\item[\rm(1)]~ $g(x,y,Q), g(y,z,Q), g(x,z,Q) \subseteq \phi(Q)$.
\item[\rm(2)]~$g(x,Q^{(2)}), g(y,Q^{(2)}), g(z,Q^{(2)}) \subseteq \phi(Q)$.
\item[\rm(3)]~$g(Q^{(3)}) \subseteq \phi(Q)$.
\end{itemize} 
\end{theorem}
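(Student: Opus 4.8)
The plan is to prove (1), (2), (3) in that order, each one feeding into the next, and each by contradiction through a \emph{perturbation} argument combined with the reversibility of the canonical $m$-ary hypergroup $(R,f)$. Throughout I use that $\phi(Q)\subseteq Q$, that $Q\subseteq {\bf r^{(m,n)}}(Q)$, and that $\phi(Q)$, $Q$ and ${\bf r^{(m,n)}}(Q)$ are hyperideals, hence closed under $f$ and under inverses. The workhorse is distributivity, which for $n=2$ reads $g(a,f(b,c,0^{(m-2)}))=f(g(a,b),g(a,c),0^{(m-2)})$; coupled with reversibility it lets me isolate either summand of a hypersum $f(\,\cdot\,,\,\cdot\,,0^{(m-2)})$. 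Here $(x,y,z)$ is a $\phi$-$(2,2)$ primary of $Q$, so $g(x,y,z)\in\phi(Q)$, $g(x,y)\notin Q$, and $g(y,z),g(x,z)\notin{\bf r^{(m,n)}}(Q)$.

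For (1) I show $g(x,y,Q)\subseteq\phi(Q)$. Fix $q\in Q$ and any $w\in f(z,q,0^{(m-2)})$. Distributivity gives $g(x,y,w)\in f(g(x,y,z),g(x,y,q),0^{(m-2)})$, and since $g(x,y,z)\in\phi(Q)\subseteq Q$ and $g(x,y,q)\in Q$, this forces $g(x,y,w)\in Q$. Suppose $g(x,y,q)\notin\phi(Q)$. Reversibility applied to that membership yields $g(x,y,q)\in f(g(x,y,w),-g(x,y,z),0^{(m-2)})$; as $-g(x,y,z)\in\phi(Q)$, were $g(x,y,w)\in\phi(Q)$ we would get $g(x,y,q)\in\phi(Q)$, a contradiction, so $g(x,y,w)\in Q-\phi(Q)$. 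The $\phi$-$(2,2)$-absorbing primary property applied to $(x,y,w)$ gives $g(x,y)\in Q$, or $g(x,w)\in{\bf r^{(m,n)}}(Q)$, or $g(y,w)\in{\bf r^{(m,n)}}(Q)$. The first contradicts $g(x,y)\notin Q$. For the second, $g(x,w)\in f(g(x,z),g(x,q),0^{(m-2)})$, so reversibility gives $g(x,z)\in f(g(x,w),-g(x,q),0^{(m-2)})\subseteq{\bf r^{(m,n)}}(Q)$ (using $g(x,q)\in Q\subseteq{\bf r^{(m,n)}}(Q)$), contradicting $g(x,z)\notin{\bf r^{(m,n)}}(Q)$; the third is analogous. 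The decisive point is that perturbing $z$ is essential: applying the definition to $(x,y,q)$ directly is useless, since $g(x,q),g(y,q)$ already lie in ${\bf r^{(m,n)}}(Q)$ and make the radical alternatives vacuous. The inclusions $g(y,z,Q)\subseteq\phi(Q)$ and $g(x,z,Q)\subseteq\phi(Q)$ follow the same template, perturbing $x$ (resp.\ $y$) and ordering the triple so that the three alternatives of the definition reproduce exactly the three defining non-memberships of $(x,y,z)$.

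For (2) I prove $g(x,Q^{(2)})\subseteq\phi(Q)$ from (1); the $y$ and $z$ cases are identical. Fix $q_1,q_2\in Q$, take $w_1\in f(y,q_1,0^{(m-2)})$, $w_2\in f(z,q_2,0^{(m-2)})$, and expand $g(x,w_1,w_2)$ by distributivity into an $f$-combination of $g(x,y,z)$, $g(x,y,q_2)$, $g(x,z,q_1)$, $g(x,q_1,q_2)$ (and zeros). The first three lie in $\phi(Q)$ by $g(x,y,z)\in\phi(Q)$ and part (1), so, exactly as above, assuming $g(x,q_1,q_2)\notin\phi(Q)$ forces $g(x,w_1,w_2)\in Q-\phi(Q)$. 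Applying the definition to $(x,w_1,w_2)$ and using reversibility to collapse $w_1\mapsto y$, $w_2\mapsto z$ in each alternative contradicts $g(x,y)\notin Q$, $g(x,z)\notin{\bf r^{(m,n)}}(Q)$ and $g(y,z)\notin{\bf r^{(m,n)}}(Q)$ respectively. Part (3), $g(Q^{(3)})\subseteq\phi(Q)$, is the same pattern with all three coordinates perturbed: for $q_1,q_2,q_3\in Q$ and $w_1,w_2,w_3$ chosen in $f(x,q_1,0^{(m-2)})$, $f(y,q_2,0^{(m-2)})$, $f(z,q_3,0^{(m-2)})$, every cross term of $g(w_1,w_2,w_3)$ except $g(q_1,q_2,q_3)$ lands in $\phi(Q)$ by (1) and (2), and the absorbing-primary alternatives for $(w_1,w_2,w_3)$ again reduce, via reversibility, to the three forbidden memberships of the triple.

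The distributive expansions are routine bookkeeping. The step needing care is the repeated passage, via reversibility, from a membership in a hypersum back to a membership of a single summand (e.g.\ from $g(x,w)\in f(g(x,z),g(x,q),0^{(m-2)})$ to $g(x,z)\in f(g(x,w),-g(x,q),0^{(m-2)})$), because the single-valued $g$ must be tracked through the multivalued $f$ and one must keep straight which summand is being isolated. Thus the main obstacle is organizational rather than conceptual: in each alternative of the definition one must choose the perturbation and the ordering of the triple so that the isolated summand reproduces exactly one of $g(x,y)\notin Q$, $g(x,z)\notin{\bf r^{(m,n)}}(Q)$, $g(y,z)\notin{\bf r^{(m,n)}}(Q)$, while verifying that every remaining term produced by distributivity already lies in $\phi(Q)$ (by parts (1)--(2)) or in ${\bf r^{(m,n)}}(Q)$.
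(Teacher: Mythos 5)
Your proof is correct, but it follows a genuinely different route from the paper's. The paper proves all three parts by transfer to the quotient: it invokes Theorem \ref{weak2} to say that $\bigl(f(x,\phi(Q),0^{(m-2)}),f(y,\phi(Q),0^{(m-2)}),f(z,\phi(Q),0^{(m-2)})\bigr)$ is a $(2,2)$-zero primary of $Q/\phi(Q)$, invokes Theorem \ref{weak} to say $Q/\phi(Q)$ is weakly $(2,2)$-absorbing primary in $R/\phi(Q)$, then quotes Theorems 4.9 and 4.10 of the cited paper on weakly $(k,n)$-absorbing primary hyperideals to get $f(g(x,y,Q),\phi(Q),0^{(m-2)})=\phi(Q)$, etc., and pulls the conclusion back to $R$. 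You instead work directly in $R$ with the perturbation-plus-reversibility argument (the hyperstructure analogue of the classical Badawi-style argument for weakly $2$-absorbing primary ideals): your case analysis is complete, the orderings of the perturbed triples are chosen so that the three alternatives of the $\phi$-$(2,2)$-absorbing primary definition collapse, via reversibility of the canonical $m$-ary hypergroup and closure of $Q$, ${\bf r^{(m,n)}}(Q)$, $\phi(Q)$ under $f$ and negation, onto exactly the three forbidden memberships of $(x,y,z)$; and your observation that applying the definition to $(x,y,q)$ directly is vacuous is precisely the reason the perturbation is unavoidable. In effect you reprove the content of Theorems 4.9/4.10 of the reference inside the $\phi$-setting, which makes your argument self-contained and independent of the quotient machinery, at the cost of length and bookkeeping; the paper's proof is much shorter but its correctness rests entirely on the two correspondence theorems and the external results. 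Note that both proofs share the same tacit hypothesis that $\phi(Q)$ is a hyperideal contained in $Q$ (forced anyway by $g(x,y,z)\in\phi(Q)\neq\varnothing$, and needed for the quotient $Q/\phi(Q)$ to make sense in the paper's argument, and for your steps $g(x,y,z)\in\phi(Q)\subseteq Q$ and $-g(x,y,z)\in\phi(Q)$), so this is not a defect peculiar to your write-up.
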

\begin{proof}
(1) Let $(x,y,z)$ be a $\phi$-$(2,2)$ primary of a $\phi$-$(2,2)$-absorbing primary hyperideal $Q$. By Theorem \ref{weak2},  $(f(x,\phi(Q),0^{(m-2)}),f(y,\phi(Q),0^{(m-2)}),f(z,\phi(Q),0^{(m-2)})$ is a $(2,2)$-zero primary of $Q/\phi(Q)$ since $(x,y,z)$ is a $\phi$-$(2,2)$ primary of $Q$.  Thus 

$\hspace{1cm}f(g(x,y,Q),\phi(Q),0^{(m-2)})=f(g(y,z,Q),\phi(Q),0^{(m-2)})$

$\hspace{5.1cm}=f(g(x,z,Q),\phi(Q),0^{(m-2)})$

$\hspace{5.1cm}=\phi(Q)$\\ by Theorem 4.9 in \cite{weak}, which implies $g(x,y,Q)$, $g(y,z,Q)$ and $g(x,z,Q)$ are subsets of $\phi(Q)$.

(2) Theorem \ref{weak2} shows that $(f(x,\phi(Q),0^{(m-2)}),f(y,\phi(Q),0^{(m-2)}),f(z,\phi(Q),0^{(m-2)})$ is a $(2,2)$-zero primary of $Q/\phi(Q)$. Moreover, Theorem \ref{weak} shows that $Q/\phi(Q)$ is a weakly $(2,2)$-absorbing primary of $R/\phi(Q)$. Then $f(g(x,Q^{(2)}),\phi(Q),0^{(m-2)})=f(g(y,Q^{(2)}),\phi(Q),0^{(m-2)})=f(g(z,Q^{(2)}),\phi(Q),0^{(m-2)})=\phi(Q)$, by Theorem 4.9 of \cite{weak}. Consequently, $g(x,Q^{(2)}), g(y,Q^{(2)}), g(z,Q^{(2)})$ are subsets of $\phi(Q)$.

(3) Again,  $(f(x,\phi(Q),0^{(m-2)}),f(y,\phi(Q),0^{(m-2)}),f(z,\phi(Q),0^{(m-2)})$ is a $(2,2)$-zero primary of $Q/\phi(Q)$ and $Q/\phi(Q)$ is a weakly $(2,2)$-absorbing primary of $R/\phi(Q)$ by Theorem \ref{weak2} and Theorem \ref{weak}, respectively, then $f(g(Q^{(3)}),\phi(Q),0^{(m-2)})=\phi(Q)$ by Theorem 4.10 in \cite{weak}. Thus $g(Q^{(3)})$ is a subset of $\phi(Q)$.
\end{proof}
\begin{theorem} \label{kweak}
Suppose that $Q$ is a proper hyperideal of a commutative Krasner $(m,n)$-hyperring $R$ and $\phi: \mathcal{HI}(R) \longrightarrow \mathcal{HI}(R) \cup \{\varnothing\}$ is a function. Then the followings are equivalent:

\begin{itemize} 
\item[\rm(1)]~ $Q$  is   a $\phi$-$(k,n)$-absorbing primary hyperideal of $R$.  
\item[\rm(2)]~ $Q/\phi(Q)$ is a weakly $(k,n)$-absorbing primary hyperideal of $R/\phi(Q)$.



\end{itemize}
\end{theorem}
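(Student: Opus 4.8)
The plan is to generalize the argument of Theorem \ref{weak} from the special case $(m,2)$ with $k=2$ to arbitrary $(m,n)$ and arbitrary $k$, using the quotient projection as the translation device between the two conditions. Throughout I work under the standing hypothesis that $\phi(Q)$ is a hyperideal of $R$ with $\phi(Q) \subseteq Q$, which is exactly what makes the quotient $R/\phi(Q)$ and the hyperideal $Q/\phi(Q)$ meaningful. Let $\pi: R \longrightarrow R/\phi(Q)$ be the canonical projection $\pi(r) = f(r, \phi(Q), 0^{(m-2)})$, which by Theorem 3.2 in \cite{sorc1} is a surjective homomorphism of Krasner $(m,n)$-hyperrings, and recall that the zero of $R/\phi(Q)$ is $\phi(Q)$ itself.

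First I would record three translation facts that drive both implications. (i) Since $\pi$ is a homomorphism it commutes with every iterated $g$-product, so for any subfamily $r_{i_1}, \ldots, r_{i_t}$ of the $r_i$'s one has $\pi(g(r_{i_1}, \ldots, r_{i_t})) = g(\pi(r_{i_1}), \ldots, \pi(r_{i_t}))$ (read in the $t = l(n-1)+1$ iterated sense when $t > n$). (ii) For any such $g$-product $P$, membership transfers cleanly: $\pi(P) \in Q/\phi(Q)$ iff $P \in Q$, and $\pi(P) \in {\bf r^{(m,n)}}(Q/\phi(Q))$ iff $P \in {\bf r^{(m,n)}}(Q)$, the latter relying on the identity ${\bf r^{(m,n)}}(Q/\phi(Q)) = {\bf r^{(m,n)}}(Q)/\phi(Q)$ already used in the proof of Theorem \ref{weak}. (iii) The nonvanishing criterion: $\pi(g(r_1^{kn-k+1})) \neq \phi(Q)$ in $R/\phi(Q)$ iff $g(r_1^{kn-k+1}) \notin \phi(Q)$.

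For $(1) \Rightarrow (2)$, I would take arbitrary $\bar r_1, \ldots, \bar r_{kn-k+1} \in R/\phi(Q)$; by surjectivity of $\pi$ write $\bar r_i = \pi(r_i)$, and suppose $\phi(Q) \neq g(\bar r_1^{\,kn-k+1}) \in Q/\phi(Q)$. By (i) together with (ii)--(iii) this is exactly $g(r_1^{kn-k+1}) \in Q - \phi(Q)$. Applying the $\phi$-$(k,n)$-absorbing primary property of $Q$ yields either $g(r_1^{(k-1)n-k+2}) \in Q$ or a $g$-product of $(k-1)n-k+2$ of the $r_i$'s other than $g(r_1^{(k-1)n-k+2})$ lying in ${\bf r^{(m,n)}}(Q)$; pushing each conclusion through $\pi$ via (i)--(ii) gives the corresponding conclusion for the $\bar r_i$'s, which is precisely the weakly $(k,n)$-absorbing primary property of $Q/\phi(Q)$. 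The converse $(2) \Rightarrow (1)$ runs the same chain backwards: starting from $g(r_1^{kn-k+1}) \in Q - \phi(Q)$, facts (ii)--(iii) give $\phi(Q) \neq g(\pi(r_1)^{kn-k+1}) \in Q/\phi(Q)$, the weak hypothesis on $Q/\phi(Q)$ produces the two alternatives for the $\pi(r_i)$'s, and (i)--(ii) transport them back to $R$.

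I expect the only genuinely non-routine ingredient to be fact (ii), specifically the radical--quotient identity ${\bf r^{(m,n)}}(Q)/\phi(Q) = {\bf r^{(m,n)}}(Q/\phi(Q))$; once this is in hand the rest is bookkeeping with the homomorphism $\pi$. The remaining care is purely combinatorial: the $(k-1)n-k+2$ elements selected in $R/\phi(Q)$ must be matched, via (i), with the same index set in $R$, which holds automatically because $\pi$ respects every iterated $g$-product. Since everything is uniform in $k$ and $n$, no induction is needed and the two special-case proofs of Theorem \ref{weak} are recovered by specializing.
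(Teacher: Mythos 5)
Your proof is correct, and at its core it is the same quotient-dictionary argument the paper intends: the paper proves Theorem \ref{kweak} by declaring it ``similar'' to Theorem \ref{weak}, and Theorem \ref{weak} is itself established by translating memberships back and forth across $R \to R/\phi(Q)$. Still, your execution differs in ways worth noting. The paper manipulates cosets written with full $m$-tuples of representatives, $f(a_{11}^{1(i-1)},\phi(Q),a_{1(i+1)}^{1m})$, pushes the $g$-product through distributivity, and then applies the absorbing hypothesis to the resulting \emph{sets} of the form $g(f(a_{11}^{1(i-1)},0,a_{1(i+1)}^{1m}),\dots)$ --- which is exactly why the paper needs the standing convention, announced just before Theorem \ref{weak}, that every $\phi$-$(k,n)$-absorbing primary hyperideal is \emph{strongly} $\phi$-$(k,n)$-absorbing primary. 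By lifting arbitrary elements of $R/\phi(Q)$ through the surjective projection $\pi$ and applying the ordinary elementwise property to the lifts, you bypass that convention entirely; your translation facts (i)--(iii) carry the whole argument, and each is sound under your standing hypothesis that $\phi(Q)$ is a hyperideal contained in $Q$ --- a hypothesis the paper leaves implicit even though the statement is meaningless without it, so making it explicit is an improvement. Your argument is also genuinely uniform in $k$ and $n$, whereas the paper writes out only the $(m,2)$, $k=2$ case and asserts the general case by analogy. Both treatments rest on the same unproved hinge, the identity ${\bf r^{(m,n)}}(Q)/\phi(Q)={\bf r^{(m,n)}}(Q/\phi(Q))$; you correctly isolate it as the one non-routine ingredient, and in a polished write-up it should be justified via the correspondence between prime hyperideals of $R/\phi(Q)$ and prime hyperideals of $R$ containing $\phi(Q)$.
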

\begin{proof}
It can be easily proved  in a similar manner to the proof
of Theorem \ref{weak}.
\end{proof}
Suppose that $Q$ is a $\phi$-$(k,n)$-absorbing primary hyperideal of  $R$. Then we say $(r_1^{k(n-1)+1})$ is a $\phi$-$(k,n)$ primary  of $Q$ for some $r_1^{k(n-1)+1} \in R$ if  $g(r_1^{k(n-1)+1}) \in \phi(Q)$, $g(r_1^{(k-1)n-k+2}) \notin Q$ and a $g$-product of $(k-1)n-k+2$ of $r_i^,$s, except $g(r_1^{(k-1)n-k+2})$, is not in ${\bf r^{(m,n)}}(Q)$.
\begin{theorem} \label{kweak2}
Let $R$ be a commutative Krasner $(m,2)$-hyperring  and let $\phi: \mathcal{HI}(R) \longrightarrow \mathcal{HI}(R) \cup \{\varnothing\}$ be a function. Let $Q$ be a $\phi$-$(k,n)$-absorbing primary hyperideal of $R$ and  $r_1^{k(n-1)+1} \in R$. Then the followings are equivalent:
\begin{itemize} 
\item[\rm(1)]~ $(r_1^{k(n-1)+1})$ is a $\phi$-$(k,n)$ primary of $Q$.
\item[\rm(2)]~$(f(r_1,\phi(Q),0^{(m-2)}),\cdots,f(r_{k(n-1)+1},\phi(Q),0^{(m-2)})$ is a $(k,n)$-zero primary of $Q/\phi(Q)$.
\end{itemize}
\end{theorem}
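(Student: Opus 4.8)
The plan is to mirror the proof of Theorem \ref{weak2}, transporting each of the defining conditions of a $\phi$-$(k,n)$ primary element through the canonical projection $\pi : R \longrightarrow R/\phi(Q)$ and then appealing to Theorem \ref{kweak}. Recall that $\pi$, given by $\pi(r) = f(r,\phi(Q),0^{(m-2)})$, is a homomorphism (Theorem 3.2 in \cite{sorc1}), so that $g(\pi(r_{i_1}),\dots,\pi(r_{i_n})) = \pi\big(g(r_{i_1},\dots,r_{i_n})\big)$ for every choice of indices, and the zero element of $R/\phi(Q)$ is $\phi(Q)$ itself. Writing $\bar r_i = \pi(r_i) = f(r_i,\phi(Q),0^{(m-2)})$, I would first record the two dictionary facts that make the translation work: since $\phi(Q)\subseteq Q$ (in force as soon as $Q/\phi(Q)$ is formed) we have $\pi^{-1}(Q/\phi(Q)) = Q$, and, as already observed in the proof of Theorem \ref{weak}, ${\bf r^{(m,n)}}(Q)/\phi(Q) = {\bf r^{(m,n)}}(Q/\phi(Q))$.

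For $(1)\Rightarrow(2)$, I would start from the three conditions defining a $\phi$-$(k,n)$ primary of $Q$: namely $g(r_1^{k(n-1)+1})\in\phi(Q)$, $g(r_1^{(k-1)n-k+2})\notin Q$, and every $g$-product of $(k-1)n-k+2$ of the $r_i$'s other than $g(r_1^{(k-1)n-k+2})$ lying outside ${\bf r^{(m,n)}}(Q)$. Applying $\pi$ and using that it is a homomorphism turns the first condition into $g(\bar r_1,\dots,\bar r_{k(n-1)+1}) = \phi(Q) = \bar 0$, i.e.\ the zero requirement. The second condition becomes $g(\bar r_1,\dots,\bar r_{(k-1)n-k+2})\notin Q/\phi(Q)$ by the identity $\pi^{-1}(Q/\phi(Q))=Q$, and each remaining radical condition becomes a non-membership in ${\bf r^{(m,n)}}(Q/\phi(Q))$ via the identification ${\bf r^{(m,n)}}(Q)/\phi(Q) = {\bf r^{(m,n)}}(Q/\phi(Q))$. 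By Theorem \ref{kweak}, $Q/\phi(Q)$ is a weakly $(k,n)$-absorbing primary hyperideal of $R/\phi(Q)$, so the three translated conditions are exactly the definition of $(\bar r_1,\dots,\bar r_{k(n-1)+1})$ being a $(k,n)$-zero primary of $Q/\phi(Q)$.

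For $(2)\Rightarrow(1)$, I would simply reverse each step of the dictionary. The equality $g(\bar r_1,\dots,\bar r_{k(n-1)+1}) = \bar 0$ pulls back to $g(r_1^{k(n-1)+1})\in\phi(Q)$; the non-membership $g(\bar r_1,\dots,\bar r_{(k-1)n-k+2})\notin Q/\phi(Q)$ pulls back to $g(r_1^{(k-1)n-k+2})\notin Q$ again through $\pi^{-1}(Q/\phi(Q))=Q$; and each radical non-membership pulls back through $\pi^{-1}({\bf r^{(m,n)}}(Q/\phi(Q))) = {\bf r^{(m,n)}}(Q)$. Hence $(r_1^{k(n-1)+1})$ is a $\phi$-$(k,n)$ primary of $Q$.

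The computation is routine once the homomorphism property and the two radical/quotient identifications are in place; the only genuine bookkeeping is matching the existential quantifier hidden in the phrase ``a $g$-product of $(k-1)n-k+2$ of the $r_i$'s'' on the two sides, and checking that the same unordered subset of indices is being used for the product before and after applying $\pi$. I expect this quantifier matching --- keeping the index sets of the $(k-1)n-k+2$-fold products aligned between the $\phi$-$(k,n)$ primary definition in $R$ and the $(k,n)$-zero primary definition in $R/\phi(Q)$ --- to be the main point requiring care, exactly as in Theorem \ref{weak2} but now with the heavier $(m,n)$ index notation.
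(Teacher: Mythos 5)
Your proposal is correct and takes essentially the same route as the paper: the paper's own proof is just the remark that the claim follows ``in a similar manner to Theorem \ref{weak2}'', and your argument --- pushing each defining condition of a $\phi$-$(k,n)$ primary through the projection $\pi(r)=f(r,\phi(Q),0^{(m-2)})$, using $\pi^{-1}(Q/\phi(Q))=Q$ and ${\bf r^{(m,n)}}(Q)/\phi(Q)={\bf r^{(m,n)}}(Q/\phi(Q))$, and invoking Theorem \ref{kweak} so that the $(k,n)$-zero primary notion applies to $Q/\phi(Q)$ --- is exactly that translation carried out for general $(k,n)$. Your closing caution about keeping the index sets of the $(k-1)n-k+2$-fold products aligned (reading the radical condition as ``every such product'', as the $(2,2)$ case dictates) is the right reading and the only point of care.
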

\begin{proof}
It is seen to be true in a similar manner to Theorem \ref{weak2}.
\end{proof}
\begin{theorem} \label{weak4}
Let $R$ be a commutative Krasner $(m,n)$-hyperring  and let $\phi: \mathcal{HI}(R) \longrightarrow \mathcal{HI}(R) \cup \{\varnothing\}$ be a function. Let $Q$ be a $\phi$-$(k,n)$-absorbing primary hyperideal of $R$. If $(r_1^{k(n-1)+1})$ is a $\phi$-$(k,n)$ primary of $Q$ for some $r_1^{k(n-1)+1} \in R$, then $g(r_1,\cdots,\widehat{r_{i_1}},\cdots,\widehat{r_{i_2}},\cdots,\widehat{r_{i_s}},\cdots,r_{k(n-1)+1},Q^{(s)}) \subseteq \phi(Q)$ for every $i_1,\cdots,i_s \in \{1,\cdots,k(n-1)+1\}$ and $1 \leq s \leq (k-1)n-k+2$.
\end{theorem}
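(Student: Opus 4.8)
The plan is to follow the strategy used for Theorem \ref{weak3}, transporting the data to the quotient hyperring $R/\phi(Q)$, invoking the structural results for weakly $(k,n)$-absorbing primary hyperideals from \cite{weak}, and then pulling the conclusion back to $R$. Throughout, I would keep in mind that $k(n-1)+1=kn-k+1$ and that a product containing $s$ copies of $Q$ removes exactly $s$ of the $r_i$'s, leaving $k(n-1)+1-s\geq n-1$ factors among the $r_i$'s, so the total number of entries of the $g$-product is always $k(n-1)+1$.

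First I would apply Theorem \ref{kweak2}: since $(r_1^{k(n-1)+1})$ is a $\phi$-$(k,n)$ primary of $Q$, the tuple $(f(r_1,\phi(Q),0^{(m-2)}),\cdots,f(r_{k(n-1)+1},\phi(Q),0^{(m-2)}))$ is a $(k,n)$-zero primary of $Q/\phi(Q)$. Next, by Theorem \ref{kweak}, $Q/\phi(Q)$ is a weakly $(k,n)$-absorbing primary hyperideal of $R/\phi(Q)$. These two reductions replace the hypotheses of the theorem with their zero-based counterparts in the quotient, exactly as Theorems \ref{weak2} and \ref{weak} did in the $(2,2)$ setting of Theorem \ref{weak3}.

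With the problem now set inside $R/\phi(Q)$, the core step is to apply the general $(k,n)$-analogue of the product lemma of \cite{weak} (Theorem 4.9 there, used for the mixed-product parts of Theorem \ref{weak3}): for a $(k,n)$-zero primary tuple of a weakly $(k,n)$-absorbing primary hyperideal, every $g$-product formed from $k(n-1)+1-s$ entries of the tuple together with $s$ copies of the hyperideal equals the zero of the quotient, for each $1\leq s\leq (k-1)n-k+2$. Taking the hyperideal to be $Q/\phi(Q)$, recalling that the zero of $R/\phi(Q)$ is $\phi(Q)$, and using the distributivity of $g$ over $f$ to expand products of the classes $f(r_i,\phi(Q),0^{(m-2)})$ and $Q/\phi(Q)$, this yields
\[
f\big(g(r_1,\cdots,\widehat{r_{i_1}},\cdots,\widehat{r_{i_s}},\cdots,r_{k(n-1)+1},Q^{(s)}),\phi(Q),0^{(m-2)}\big)=\phi(Q)
\]
for every admissible choice of indices $i_1,\cdots,i_s$.

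Finally I would translate this identity back into $R$: the class $f(g(\cdots),\phi(Q),0^{(m-2)})$ equals the zero class $\phi(Q)$ of $R/\phi(Q)$ precisely when $g(r_1,\cdots,\widehat{r_{i_1}},\cdots,\widehat{r_{i_s}},\cdots,r_{k(n-1)+1},Q^{(s)})\subseteq\phi(Q)$, which is exactly the asserted inclusion. I expect the main obstacle to be bookkeeping rather than conceptual: one must verify that the weak-product lemma of \cite{weak} indeed covers the full range $1\leq s\leq (k-1)n-k+2$ of mixed products (this generalizes parts (1) and (2) of Theorem \ref{weak3}, and not part (3), where all $r_i$'s are removed), and that distributivity correctly absorbs the $s$ copies of $\phi(Q)$ in the quotient representation into genuine copies of $Q$ when descending from $R/\phi(Q)$ to $R$.
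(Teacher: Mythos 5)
Your proposal matches the paper's proof essentially step for step: the paper likewise invokes Theorem \ref{kweak2} to convert the $\phi$-$(k,n)$ primary tuple into a $(k,n)$-zero primary tuple of $Q/\phi(Q)$, Theorem \ref{kweak} to see that $Q/\phi(Q)$ is weakly $(k,n)$-absorbing primary in $R/\phi(Q)$, then applies Theorem 4.9 of \cite{weak} to get that each mixed $g$-product with $s$ copies of the hyperideal equals the zero class $\phi(Q)$, and pulls this back to the inclusion $g(r_1,\cdots,\widehat{r_{i_1}},\cdots,\widehat{r_{i_s}},\cdots,r_{k(n-1)+1},Q^{(s)}) \subseteq \phi(Q)$. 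Your extra remarks on the range of $s$ and on distributivity are sound bookkeeping but do not change the argument, which is the paper's own.
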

\begin{proof}
 $(f(r_1,\phi(Q),0^{(m-2)}),\cdots,f(r_{k(n-1)+1},\phi(Q),0^{(m-2)})$ is a $(k,n)$-zero primary of $Q/\phi(Q)$ by Theorem \ref{kweak2} and   $Q/\phi(Q)$ is a weakly $(k,n)$-absorbing primary of $R/\phi(Q)$ by Theorem \ref{kweak}. Then  we conclude that 

$f(g(f(r_1,\phi(Q),0^{(m-2)}),\cdots,f(\widehat{r_{i_1}},\phi(Q),0^{(m-2)}),\cdots,f(\widehat{r_{i_2}},\phi(Q),0^{(m-2)}),\cdots,$

$\hspace{0.1cm}f(\widehat{r_{i_s}},\phi(Q),0^{(m-2)}),\cdots,
 f(r_{k(n-1)+1},\phi(Q),0^{(m-2)}),Q^{(s)}),\phi(Q),0^{(m-2)})= \phi(Q)$\\ for every $i_1,\cdots,i_s \in \{1,\cdots,k(n-1)+1\}$ and $1 \leq s \leq (k-1)n-k+2$, by Theorem 4.9 of \cite{weak}. Thus, $g(r_1,\cdots,\widehat{r_{i_1}},\cdots,\widehat{r_{i_2}},\cdots,\widehat{r_{i_s}},\cdots,r_{k(n-1)+1},Q^{(s)}) \subseteq \phi(Q)$.
\end{proof}
\begin{theorem} \label{weak5}
Let $R$ be a commutative Krasner $(m,n)$-hyperring  and let $\phi: \mathcal{HI}(R) \longrightarrow \mathcal{HI}(R) \cup \{\varnothing\}$ be a function. Let $Q$ be a $\phi$-$(k,n)$-absorbing primary hyperideal of $R$ but is not a $(k,n)$-absorbing primary. Then $g(Q^{k(n-1)+1}) \subseteq \phi(Q)$.
\end{theorem}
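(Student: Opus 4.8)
The plan is to reduce the statement to the quotient hyperring $R/\phi(Q)$ and then invoke the structural result for weakly $(k,n)$-absorbing primary hyperideals from \cite{weak}, exactly in the spirit of the $(2,2)$-case treated in Theorem \ref{weak3}(3).

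First I would exploit the hypothesis to produce a $\phi$-$(k,n)$ primary element. Since $Q$ fails to be $(k,n)$-absorbing primary, there exist $r_1^{kn-k+1} \in R$ with $g(r_1^{kn-k+1}) \in Q$ for which $g(r_1^{(k-1)n-k+2}) \notin Q$ and no $g$-product of $(k-1)n-k+2$ of the $r_i$'s, except $g(r_1^{(k-1)n-k+2})$, lies in ${\bf r^{(m,n)}}(Q)$. Because $Q$ \emph{is} $\phi$-$(k,n)$-absorbing primary, this element cannot lie in $Q-\phi(Q)$, whence $g(r_1^{kn-k+1}) \in \phi(Q)$; this is precisely a $\phi$-$(k,n)$ primary of $Q$ in the sense of the definition preceding Theorem \ref{kweak2}. (This is the general analogue of the remark made for $(2,2)$ before Theorem \ref{weak2}.)

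Next I would pass to the quotient and apply the known result. By Theorem \ref{kweak2}, the tuple $(f(r_1,\phi(Q),0^{(m-2)}),\cdots,f(r_{k(n-1)+1},\phi(Q),0^{(m-2)}))$ is a $(k,n)$-zero primary of $Q/\phi(Q)$, and by Theorem \ref{kweak} the hyperideal $Q/\phi(Q)$ is weakly $(k,n)$-absorbing primary in $R/\phi(Q)$. Now I would invoke the $(k,n)$ generalization of Theorem 4.10 of \cite{weak} (the same result already used for $k=n=2$ in Theorem \ref{weak3}(3)): a weakly $(k,n)$-absorbing primary hyperideal possessing a $(k,n)$-zero primary element satisfies $g\big((Q/\phi(Q))^{k(n-1)+1}\big)=\overline 0$. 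Unwinding the quotient yields $f(g(Q^{k(n-1)+1}),\phi(Q),0^{(m-2)})=\phi(Q)$, which is equivalent to $g(Q^{k(n-1)+1}) \subseteq \phi(Q)$, as required.

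The main obstacle is twofold. First, establishing the existence of the $\phi$-$(k,n)$ primary element rigorously in the general $(k,n)$ setting (not merely $(2,2)$) requires carefully tracking which $g$-subproducts are allowed to avoid $Q$ and ${\bf r^{(m,n)}}(Q)$, and then checking that the negation of the $(k,n)$-absorbing primary condition, combined with the $\phi$-property, indeed forces the full product into $\phi(Q)$. Second, one must confirm that the cited result of \cite{weak} truly holds at the level of $(k,n)$, so that a single reduction step reaches the full product $g(Q^{k(n-1)+1})$; note that Theorem \ref{weak4} alone is insufficient here, since its index bound $s \leq (k-1)n-k+2$ is strictly smaller than $k(n-1)+1$ (short by $n-1$ copies of $Q$) and therefore cannot recover the entire product.
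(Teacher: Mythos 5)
Your proposal is correct and follows essentially the same route as the paper: the paper's own proof of Theorem \ref{weak5} is just the instruction to argue as in Theorem \ref{weak3}(3), i.e., produce a $\phi$-$(k,n)$ primary element from the hypotheses, pass to $R/\phi(Q)$ via Theorems \ref{kweak2} and \ref{kweak}, and invoke the $(k,n)$ analogue of Theorem 4.10 of \cite{weak} to get $g\big((Q/\phi(Q))^{k(n-1)+1}\big)=\overline{0}$ before unwinding the quotient. Your closing observation that Theorem \ref{weak4} alone cannot reach the full product (its bound $s \leq (k-1)n-k+2$ falls $n-1$ copies of $Q$ short of $k(n-1)+1$) is accurate and correctly identifies that the external full-product result from \cite{weak}, rather than \ref{weak4} by itself, is the essential input---exactly as in the paper's treatment of the $(2,2)$ case.
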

\begin{proof}
This can be proved, by using Theorem \ref{weak4}, in a very similar manner to the way in which \ref{weak3} was proved.
\end{proof}
Now, let give some related corollaries.
\begin{corollary}
Let $\phi: \mathcal{HI}(R) \longrightarrow \mathcal{HI}(R) \cup \{\varnothing\}$ be a function. If $Q$ is a $\phi$-$(k,n)$-absorbing primary hyperideal of $R$ such that $g(Q^{k(n-1)+1}) \nsubseteq \phi(Q)$, then $Q$ is a $(k,n)$-absorbing primary hyperideal of $R$.
\end{corollary}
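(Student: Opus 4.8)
The plan is to obtain this corollary as the direct contrapositive of Theorem \ref{weak5}, so the argument will be very short. I would begin by assuming the two hypotheses: that $Q$ is a $\phi$-$(k,n)$-absorbing primary hyperideal of $R$ and that $g(Q^{k(n-1)+1}) \nsubseteq \phi(Q)$. The goal is to conclude that $Q$ is in fact $(k,n)$-absorbing primary.

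Next I would argue by contradiction. Suppose that $Q$ is \emph{not} a $(k,n)$-absorbing primary hyperideal of $R$. Then $Q$ satisfies exactly the hypotheses of Theorem \ref{weak5}, namely $Q$ is $\phi$-$(k,n)$-absorbing primary but fails to be $(k,n)$-absorbing primary. Applying that theorem immediately yields $g(Q^{k(n-1)+1}) \subseteq \phi(Q)$, which directly contradicts the standing assumption $g(Q^{k(n-1)+1}) \nsubseteq \phi(Q)$. Hence the assumption that $Q$ is not $(k,n)$-absorbing primary is untenable, and $Q$ must be a $(k,n)$-absorbing primary hyperideal of $R$, as required.

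There is no genuine obstacle here, since all the combinatorial and quotient-theoretic work has already been carried out in the chain Theorem \ref{kweak}–Theorem \ref{weak4}–Theorem \ref{weak5}; the corollary is purely a logical restatement. The only point requiring minimal care is to invoke Theorem \ref{weak5} in its exact form, treating ``$\phi$-$(k,n)$-absorbing primary but not $(k,n)$-absorbing primary'' as the conjoined antecedent whose negated consequent we contradict, so that the contrapositive is applied cleanly.
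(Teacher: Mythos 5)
Your proof is correct and matches the paper's intent exactly: the paper states this corollary without proof as an immediate consequence of Theorem \ref{weak5}, and your argument is precisely the contrapositive of that theorem, spelled out. Nothing is missing.
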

\begin{corollary} \label{weak6}
Let $\phi: \mathcal{HI}(R) \longrightarrow \mathcal{HI}(R) \cup \{\varnothing\}$ be a function and let $Q$ be a $\phi$-$(k,n)$-absorbing primary hyperideal of $R$  that is not a $(k,n)$-absorbing primary hyperideal of $R$. Then ${\bf r^{(m,n)}}(Q)={\bf r^{(m,n)}}(\phi(Q))$.
\end{corollary}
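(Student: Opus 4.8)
The plan is to read off the equality of radicals directly from Theorem \ref{weak5}, which under the present hypotheses (that $Q$ is $\phi$-$(k,n)$-absorbing primary but not $(k,n)$-absorbing primary) already yields $g(Q^{k(n-1)+1}) \subseteq \phi(Q)$. Since the radical ${\bf r^{(m,n)}}$ is, by definition, the intersection of the prime hyperideals containing the given hyperideal, it is monotone (if $A \subseteq B$ then ${\bf r^{(m,n)}}(A) \subseteq {\bf r^{(m,n)}}(B)$) and idempotent (${\bf r^{(m,n)}}({\bf r^{(m,n)}}(A)) = {\bf r^{(m,n)}}(A)$), and I would use these two formal properties throughout. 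The whole argument then reduces to establishing the two inclusions ${\bf r^{(m,n)}}(Q) \subseteq {\bf r^{(m,n)}}(\phi(Q))$ and ${\bf r^{(m,n)}}(\phi(Q)) \subseteq {\bf r^{(m,n)}}(Q)$.

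For the first inclusion I would prove the stronger statement $Q \subseteq {\bf r^{(m,n)}}(\phi(Q))$. Fix $a \in Q$. Because $Q$ is a hyperideal, the iterated product $g_{(k)}(a^{(k(n-1)+1)})$ of $k(n-1)+1$ copies of $a$ lies in the set $g(Q^{k(n-1)+1})$, hence $g_{(k)}(a^{(k(n-1)+1)}) \in g(Q^{k(n-1)+1}) \subseteq \phi(Q)$ by Theorem \ref{weak5}. Writing $s = k(n-1)+1 = l(n-1)+1$ with $l = k$, this is exactly the defining condition for membership in the radical supplied by Theorem 4.23 of \cite{sorc1}, so $a \in {\bf r^{(m,n)}}(\phi(Q))$. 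As $a \in Q$ was arbitrary, $Q \subseteq {\bf r^{(m,n)}}(\phi(Q))$; applying ${\bf r^{(m,n)}}$ and invoking monotonicity together with idempotence gives ${\bf r^{(m,n)}}(Q) \subseteq {\bf r^{(m,n)}}({\bf r^{(m,n)}}(\phi(Q))) = {\bf r^{(m,n)}}(\phi(Q))$.

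The reverse inclusion is immediate from monotonicity once we record that $\phi(Q) \subseteq Q$, which holds under the standing assumption that $\phi$ is a reduction function (and indeed in all the examples of this paper one has $\phi(Q) \subseteq Q$); then $\phi(Q) \subseteq Q$ yields ${\bf r^{(m,n)}}(\phi(Q)) \subseteq {\bf r^{(m,n)}}(Q)$. Combining the two inclusions produces ${\bf r^{(m,n)}}(Q) = {\bf r^{(m,n)}}(\phi(Q))$, as desired.

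I expect the only delicate point to be the bookkeeping with the $n$-ary iterated product: one must verify that the exponent $k(n-1)+1$ appearing in $g(Q^{k(n-1)+1})$ is precisely of the form $l(n-1)+1$ demanded by the radical characterization (taking $l = k$), so that the single element $g_{(k)}(a^{(k(n-1)+1)}) \in \phi(Q)$ already certifies $a \in {\bf r^{(m,n)}}(\phi(Q))$ without needing to fall back on the auxiliary case $g(a^{(s)},1_R^{(n-s)}) \in \phi(Q)$ for $s \le n$. Everything else is a formal consequence of the monotonicity and idempotence of the radical operator.
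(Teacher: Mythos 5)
Your proposal is correct and takes essentially the same route as the paper's own proof: both invoke Theorem \ref{weak5} to get $g(Q^{k(n-1)+1}) \subseteq \phi(Q)$, deduce ${\bf r^{(m,n)}}(Q) \subseteq {\bf r^{(m,n)}}(\phi(Q))$ from that containment, and obtain the reverse inclusion from $\phi(Q) \subseteq Q$. The only difference is that you spell out, via the element-wise radical characterization from \cite{sorc1} (taking $s = k(n-1)+1$, $l=k$) together with monotonicity and idempotence, the step the paper compresses into ``This means,'' and you explicitly flag that $\phi(Q)\subseteq Q$ rests on the reduction-function convention --- both are correct and faithful elaborations of the paper's argument.
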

\begin{proof}
By Theorem \ref{weak5}, we have $g(Q^{k(n-1)+1}) \subseteq \phi(Q)$ as  $Q$ is not a $(k,n)$-absorbing primary. This means ${\bf r^{(m,n)}}(Q) \subseteq {\bf r^{(m,n)}}(\phi(Q))$. On the other hand, from $\phi(Q) \subseteq Q$, it follows that ${\bf r^{(m,n)}}(\phi(Q)) \subseteq {\bf r^{(m,n)}}(Q)$. Hence ${\bf r^{(m,n)}}(Q)={\bf r^{(m,n)}}(\phi(Q))$.
\end{proof}
\begin{corollary}
Let $\phi: \mathcal{HI}(R) \longrightarrow \mathcal{HI}(R) \cup \{\varnothing\}$ be a function and let $Q$ be a proper hyperideal of $R$ such that ${\bf r^{(m,n)}}(\phi(Q))$ is a $(k,n)$-absorbing hyperideal of $R$. Then $Q$ is a $\phi$-$(k+1,n)$-absorbing primary hyperideal of $R$ if and only if $Q$ is a $(k+1,n)$-absorbing primary hyperideal of $R$.
\end{corollary}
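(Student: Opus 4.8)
The plan is to treat the two implications separately. The reverse implication is immediate and needs no assumption on $\phi$: if $Q$ is $(k+1,n)$-absorbing primary and $g(r_1^{(k+1)n-k}) \in Q-\phi(Q)$, then in particular $g(r_1^{(k+1)n-k}) \in Q$, so the defining property of a $(k+1,n)$-absorbing primary hyperideal already yields that $g(r_1^{kn-k+1}) \in Q$ or that some $g$-product of $kn-k+1$ of the $r_i$'s other than $g(r_1^{kn-k+1})$ lies in ${\bf r^{(m,n)}}(Q)$. Hence $Q$ is $\phi$-$(k+1,n)$-absorbing primary.

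For the forward implication I would argue by contradiction, assuming $Q$ is $\phi$-$(k+1,n)$-absorbing primary but not $(k+1,n)$-absorbing primary. First I would record two consequences of this assumption. Since $Q$ is $\phi$-$(k+1,n)$-absorbing primary and fails to be $(k+1,n)$-absorbing primary, Corollary \ref{weak6} (with $k+1$ in place of $k$) gives ${\bf r^{(m,n)}}(Q)={\bf r^{(m,n)}}(\phi(Q))$; and since ${\bf r^{(m,n)}}(\phi(Q))$ is $(k,n)$-absorbing by hypothesis, ${\bf r^{(m,n)}}(Q)$ is $(k,n)$-absorbing, hence also $(k+1,n)$-absorbing by Theorem \ref{inclu} (read with the empty function, for which the $\phi$-$(k,n)$-absorbing notion reduces to the ordinary $(k,n)$-absorbing notion). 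Next I would unpack the failure: there is a tuple $r_1^{(k+1)n-k} \in R$ with $g(r_1^{(k+1)n-k}) \in Q$, with $g(r_1^{kn-k+1}) \notin Q$, and with no $g$-product of $kn-k+1$ of the $r_i$'s other than $g(r_1^{kn-k+1})$ lying in ${\bf r^{(m,n)}}(Q)$. Because both of these conclusions fail while $Q$ is $\phi$-$(k+1,n)$-absorbing primary, the product cannot lie in $Q-\phi(Q)$, so $g(r_1^{(k+1)n-k}) \in \phi(Q) \subseteq {\bf r^{(m,n)}}(\phi(Q))={\bf r^{(m,n)}}(Q)$.

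The decisive step is to extract a forbidden product from the radical. Applying the $(k+1,n)$-absorbing property of ${\bf r^{(m,n)}}(Q)$ to $g(r_1^{(k+1)n-k}) \in {\bf r^{(m,n)}}(Q)$ yields some $g$-product of $kn-k+1$ of the $r_i$'s in ${\bf r^{(m,n)}}(Q)$; by the failure condition this product can only be $g(r_1^{kn-k+1})$ (any other subset would already contradict the failure condition), whence $g(r_1^{kn-k+1}) \in {\bf r^{(m,n)}}(Q)$. Writing $g(r_1^{kn-k+1})=g_{(k)}(r_1^{k(n-1)+1})$ and now invoking the $(k,n)$-absorbing property of ${\bf r^{(m,n)}}(Q)$, I obtain a subset $T \subseteq \{1,\dots,kn-k+1\}$ with $|T|=(k-1)n-k+2$ whose corresponding $g$-product lies in ${\bf r^{(m,n)}}(Q)$. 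Since ${\bf r^{(m,n)}}(Q)$ is a hyperideal, I may adjoin the $n-1$ tail indices $kn-k+2,\dots,(k+1)n-k$ without leaving the radical; this produces a $g$-product of $(k-1)n-k+2+(n-1)=kn-k+1$ of the $r_i$'s that lies in ${\bf r^{(m,n)}}(Q)$ and that differs from $g(r_1^{kn-k+1})$ because it involves $r_{(k+1)n-k}$. This contradicts the failure condition, so $Q$ must be $(k+1,n)$-absorbing primary.

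I expect this last manoeuvre to be the main obstacle. The definition of a $(k+1,n)$-absorbing primary hyperideal is asymmetric: it distinguishes the product $g(r_1^{kn-k+1})$, which is required to land in $Q$, from all the remaining products, which need only land in ${\bf r^{(m,n)}}(Q)$. A direct application of the absorbing property of the radical may return precisely this distinguished product, which lies in the radical but perhaps not in $Q$, matching neither alternative. Breaking $g(r_1^{kn-k+1})$ into a strictly smaller product via the $(k,n)$-absorbing hypothesis and padding it back up to length $kn-k+1$ with the tail indices is what forces a product that is simultaneously in the radical and distinct from $g(r_1^{kn-k+1})$. The routine points left to verify are the index identities $kn-k+1=k(n-1)+1$ and $(k-1)n-k+2+(n-1)=kn-k+1$, that exactly $n-1$ tail indices are available, and that $n \geq 2$ guarantees the manufactured product is genuinely different.
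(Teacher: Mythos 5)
Your proposal is correct, and its skeleton coincides with the paper's: both directions are split the same way, the reverse direction is dismissed as immediate, and the forward direction proceeds by assuming $Q$ is not $(k+1,n)$-absorbing primary, invoking Corollary \ref{weak6} (with $k+1$ in place of $k$) to get ${\bf r^{(m,n)}}(Q)={\bf r^{(m,n)}}(\phi(Q))$, and concluding that ${\bf r^{(m,n)}}(Q)$ is $(k,n)$-absorbing. Where you diverge is the closing step: the paper finishes by citing Theorem 4.9 of \cite{rev2} as a black box for the implication ``${\bf r^{(m,n)}}(Q)$ is $(k,n)$-absorbing $\Longrightarrow$ $Q$ is $(k+1,n)$-absorbing primary,'' whereas you re-derive exactly this implication by hand: you first get the witness product into the radical, apply the $(k+1,n)$-absorbing property of ${\bf r^{(m,n)}}(Q)$ (via Theorem \ref{inclu} with the empty function), observe that the returned product can only be the distinguished one $g(r_1^{kn-k+1})$, then break that down with the $(k,n)$-absorbing property and pad back up to length $kn-k+1$ using the tail indices, producing a non-distinguished product in the radical and hence a contradiction. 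Your inline argument is sound --- in particular you correctly identified the asymmetry in the primary definition (the distinguished product lands in the radical but need not land in $Q$) that makes the two-stage break-down-and-pad manoeuvre necessary, which is presumably the content of the proof of the cited theorem. What the paper's route buys is brevity; what yours buys is self-containedness, since the reader need not consult \cite{rev2}. One small simplification: your detour through $\phi(Q)$ to place $g(r_1^{(k+1)n-k})$ in the radical is unnecessary, because $g(r_1^{(k+1)n-k}) \in Q \subseteq {\bf r^{(m,n)}}(Q)$ directly; the $\phi$-hypothesis is genuinely needed only to trigger Corollary \ref{weak6}.
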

\begin{proof}
$(\Longrightarrow)$ Let $Q$ be a $\phi$-$(k+1,n)$-absorbing primary hyperideal of $R$. If $Q$ is not a $(k+1,n)$-absorbing primary hyperideal of $R$. Hence ${\bf r^{(m,n)}}(Q)={\bf r^{(m,n)}}(\phi(Q))$ by Corollary \ref{weak6}. Then ${\bf r^{(m,n)}}(Q)$ is a $(k,n)$-absorbing hyperideal of $R$ which implies that $Q$ is is a $(k+1,n)$-absorbing primary hyperideal of $R$ by Theorem 4.9 in \cite{rev2}. 

$(\Longleftarrow)$ It is clear.
\end{proof}
\begin{theorem} \label{homo}
Let $h:R_1 \longrightarrow R_2$ be a $\phi_1$-$\phi_2$-homomorphism, where $\phi_1$ and $\phi_2$ are two reduction functions of $\mathcal{HI}(R_1)$ and $\mathcal{HI}(R_2)$, respectively. Then:
\begin{itemize} 
\item[\rm(1)]~ If $Q_2$ is a $\phi_2$-$(k,n)$-absorbing primary hyperideal of $R_2$, then $h^{-1}(Q_2)$ is a $\phi_1$-$(k,n)$-absorbing primary hyperideal of $R_1$. 
\item[\rm(2)]~If $h$ is surjective and $Q_1$ is a $\phi_1$-$(k,n)$-absorbing primary hyperideal of $R_1$ with $Ker (h) \subseteq Q_1$, then $h(Q_1)$ is a $\phi_2$-$(k,n)$-absorbing primary hyperideal of $R_2$.
\end{itemize} 
\end{theorem}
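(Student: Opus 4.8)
The plan is to reproduce the bookkeeping of Theorem \ref{homo1} and add one new ingredient: the compatibility of $h$ with the radical operator ${\bf r^{(m,n)}}$. The two structural facts I will lean on throughout are that $h$ respects every $g$-product (and the scalar identity, $h(1_{R_1})=1_{R_2}$, which in part (2) is forced by surjectivity), and the $\phi_1$-$\phi_2$-compatibility, namely $\phi_1(h^{-1}(I_2))=h^{-1}(\phi_2(I_2))$ for all $I_2$, together with $\phi_2(h(I_1))=h(\phi_1(I_1))$ whenever $Ker(h) \subseteq I_1$.

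For part (1), I would begin with $g(r_1^{kn-k+1}) \in h^{-1}(Q_2)-\phi_1(h^{-1}(Q_2))$ and apply $h$; using $\phi_1(h^{-1}(Q_2))=h^{-1}(\phi_2(Q_2))$ this becomes $g(h(r_1),\cdots,h(r_{kn-k+1})) \in Q_2-\phi_2(Q_2)$. Invoking the $\phi_2$-$(k,n)$-absorbing primary property of $Q_2$ gives either $g(h(r_1),\cdots,h(r_{(k-1)n-k+2})) \in Q_2$, which pulls back to $g(r_1^{(k-1)n-k+2}) \in h^{-1}(Q_2)$, or a $g$-product of $(k-1)n-k+2$ of the $h(r_i)$'s in ${\bf r^{(m,n)}}(Q_2)$. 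The latter alternative closes once I establish the inclusion $h^{-1}({\bf r^{(m,n)}}(Q_2)) \subseteq {\bf r^{(m,n)}}(h^{-1}(Q_2))$, which then places the matching $g$-product of the $r_i$'s inside ${\bf r^{(m,n)}}(h^{-1}(Q_2))$.

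For part (2), given $g(s_1^{kn-k+1}) \in h(Q_1)-\phi_2(h(Q_1))$ I choose preimages $r_i$ with $h(r_i)=s_i$. Since $Ker(h) \subseteq Q_1$, the membership $h(g(r_1^{kn-k+1})) \in h(Q_1)$ forces $g(r_1^{kn-k+1}) \in Q_1$, and $\phi_2(h(Q_1))=h(\phi_1(Q_1))$ rules out $g(r_1^{kn-k+1}) \in \phi_1(Q_1)$, so $g(r_1^{kn-k+1}) \in Q_1-\phi_1(Q_1)$. The $\phi_1$-$(k,n)$-absorbing primary property of $Q_1$, pushed forward through $h$, yields either $g(s_1^{(k-1)n-k+2}) \in h(Q_1)$ or a $g$-product of $(k-1)n-k+2$ of the $s_i$'s in $h({\bf r^{(m,n)}}(Q_1))$; the second case is finished by the inclusion $h({\bf r^{(m,n)}}(Q_1)) \subseteq {\bf r^{(m,n)}}(h(Q_1))$.

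The only genuinely new point, and hence the main obstacle, is these two radical inclusions, everything else being the same transfer of $g$-products performed in Theorem \ref{homo1}. Both follow from the radical characterization (Theorem 4.23 in \cite{sorc1}): a witness $g(a^{(s)},1^{(n-s)}) \in I$ or $g_{(l)}(a^{(s)}) \in I$ for $a \in {\bf r^{(m,n)}}(I)$ is carried by $h$ (using $h(1_{R_1})=1_{R_2}$) to a witness for $h(a) \in {\bf r^{(m,n)}}(h(I))$ in part (2), and pulling a witness for $h(a) \in {\bf r^{(m,n)}}(Q_2)$ back through $h$ produces a witness for $a \in {\bf r^{(m,n)}}(h^{-1}(Q_2))$ in part (1). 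One should also check that $h^{-1}(Q_2)$ and $h(Q_1)$ are proper, but this is immediate from $h(1_{R_1})=1_{R_2}$ and $Ker(h)\subseteq Q_1$; given the author's pattern of declaring such arguments routine, I expect the proof to be presented as analogous to Theorem \ref{homo1} with these radical observations inserted.
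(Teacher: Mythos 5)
Your proposal follows the paper's proof essentially step for step: apply $h$ (respectively choose preimages under the surjection), use the $\phi_1$-$\phi_2$-compatibility $\phi_1(h^{-1}(Q_2))=h^{-1}(\phi_2(Q_2))$ (respectively $\phi_2(h(Q_1))=h(\phi_1(Q_1))$, valid since $Ker(h)\subseteq Q_1$) to land in $Q_2-\phi_2(Q_2)$ (respectively $Q_1-\phi_1(Q_1)$), invoke the absorbing-primary hypothesis, and transfer the resulting $g$-product back through $h^{-1}({\bf r^{(m,n)}}(Q_2))\subseteq{\bf r^{(m,n)}}(h^{-1}(Q_2))$ and $h({\bf r^{(m,n)}}(Q_1))\subseteq {\bf r^{(m,n)}}(h(Q_1))$. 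The only difference is one of care rather than of method: the paper asserts these two radical inclusions without justification, whereas you correctly isolate them as the one new ingredient beyond Theorem \ref{homo1} and prove them by transporting the power witnesses of Theorem 4.23 in \cite{sorc1} along $h$.
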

\begin{proof}
 $(1)$ Let $Q_2$ be a $\phi_2$-$(k,n)$-absorbing primary hyperideal of $R_2$. Assume that $r_1^{kn-k+1} \in R_1$ such that $g(r_1^{kn-k+1}) \in h^{-1}(Q_2)-\phi_1(h^{-1}(Q_2))$. Then we get $h(g(r_1^{kn-k+1}))=g(h(r_1),\cdots,h(r_{kn-k+1})) \in Q_2-\phi_2(Q_2)$. Since $Q_2$ is a $\phi_2$-$(k,n)$-absorbing primary hyperideal of $R_2$, we obtain either $g(h(r_1),\cdots,h(r_{(k-1)n-k+2}))=h(g(r_1^{(k-1)n-k+2})) \in Q_2$ which means $g(r_1^{(k-1)n-k+2}) \in h^{-1}(Q_2)$, or 
 
 $g(h(r_1),\cdots,\widehat{h(r_i)},\cdots,h(r_{kn-k+1)})=h(g(r_1,\cdots,\widehat{r_i},\cdots,r_{kn-k+1)})) \in {\bf r^{(m,n)}}(Q_2)$
 which means $g(r_1,\cdots,\widehat{r_i},\cdots,r_{kn-k+1)}) \in h^{-1}({\bf r^{(m,n)}}(Q_2))={\bf r^{(m,n)}}(h^{-1}(Q_2))$ for some $1 \leq i \leq n$. Hence $h^{-1}(Q_2)$ is a $\phi_1$-$(k,n)$-absorbing primary hyperideal of $R_1$.

 $(2)$ Assume that $h$ is surjective and $Q_1$ is a $\phi_1$-$(k,n)$-absorbing primary hyperideal of $R_1$ with $Ker (h) \subseteq Q_1$. Let $s_1^{kn-k+1} \in R_2$ such that $g(s_1^{kn-k+1}) \in h(Q_1)-\phi_2(h(Q_1))$. Therefore there exist $r_1^{kn-k+1} \in R_1$ with $h(r_1)=s_1, \cdots, h(r_{kn-k+1})=s_{kn-k+1}$. Hence we get $h(g(r_1^{kn-k+1})=g(h(r_1),\cdots,h(r_{kn-k+1}))=g(s_1^{kn-k+1}) \in h(Q_1)$. Since $h$ is a $\phi_1$-$\phi_2$-epimorphism and $Ker (h) \subseteq Q_1$, we have $g(r_1^{kn-k+1}) \in Q_1-\phi_1(Q_1)$. Since $Q_1$ is a $\phi_1$-$(k,n)$-absorbing primary hyperideal of $R_1$, we conclude that $g(r_1^{(k-1)n-k+2)}) \in Q_1$ which implies 
 
$ \hspace{2cm}h(g(r_1^{(k-1)n-k+2)})=g(h(r_1),\cdots,h(r_{(k-1)n-k+2)}) $

$\hspace{4.8cm}=g(s_1^{(k-1)n-k+2)}) \in h(Q_1)$,\\ 
 or $g(r_1,\cdots,\widehat{r_i},\cdots,r_{kn-k+1}) \in {\bf r^{(m,n)}}(Q_1)$  implies $h(g(r_1,\cdots,\widehat{r_i},\cdots,r_{kn-k+1})=g(h(r_1),\cdots,\widehat{h(r_i)},\cdots,h(r_{kn-k+1}))=g(s_1,\cdots,\widehat{s_i},\cdots,s_{kn-k+1}) \in h({\bf r^{(m,n)}}(Q_1))\subseteq {\bf r^{(m,n)}}(h(Q_1))$ for some $1 \leq i \leq (k-1)n-k+2$. Consequently,  $h(Q_1)$ is a $\phi_2$-$(k,n)$-absorbing primary hyperideal of $R_2$.
\end{proof}
 As an instant consequence of the previous theorem, we get the following explicit result.

\begin{theorem}
Let $Q$ and $P$ be  two  hyperideals of $R$ and $\phi$ be a reduction function of $\mathcal{HI}(R)$ such that $P \subseteq \phi(Q) \subseteq Q$. If $Q$ is a $\phi$-$(k,n)$-absorbing primary hyperideal of $R$, then $Q/P$ is a $\phi_q$-$(k,n)$-absorbing primary hyperideal of $R/P$.
\end{theorem}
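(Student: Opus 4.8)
The plan is to obtain $Q/P$ as the image of $Q$ under the canonical projection and then invoke Theorem \ref{homo}(2). First I would introduce the projection $\pi\colon R\longrightarrow R/P$, $\pi(r)=f(r,P,0^{(m-2)})$, which is a surjective homomorphism by Theorem 3.2 in \cite{sorc1}, has $Ker(\pi)=P$, and satisfies $\pi(Q)=Q/P$ because $P\subseteq Q$.

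The key step is to check that $\pi$ is a $\phi$-$\phi_q$-homomorphism, with $\phi_q$ the reduction function on $\mathcal{HI}(R/P)$ given by $\phi_q(I/P)=\phi(I)/P$ (already noted to be a reduction function in the discussion preceding the statement). For any $I/P\in\mathcal{HI}(R/P)$ one has $P\subseteq I$, hence $\pi^{-1}(I/P)=I$, so $\phi\bigl(\pi^{-1}(I/P)\bigr)=\phi(I)$; on the other hand $\pi^{-1}\bigl(\phi_q(I/P)\bigr)=\pi^{-1}(\phi(I)/P)=\phi(I)$. Thus the defining identity $\phi(\pi^{-1}(I/P))=\pi^{-1}(\phi_q(I/P))$ holds for every $I/P$, so $\pi$ is a $\phi$-$\phi_q$-homomorphism.

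With this in hand the result is immediate: $Ker(\pi)=P\subseteq\phi(Q)\subseteq Q$ supplies the hypothesis $Ker(\pi)\subseteq Q$ of Theorem \ref{homo}(2), and $Q$ is $\phi$-$(k,n)$-absorbing primary by assumption, so Theorem \ref{homo}(2) forces $\pi(Q)=Q/P$ to be a $\phi_q$-$(k,n)$-absorbing primary hyperideal of $R/P$.

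I expect the only genuinely delicate point to be this homomorphism-compatibility check, in particular the equality $\pi^{-1}(\phi(I)/P)=\phi(I)$, which quietly uses $P\subseteq\phi(I)$ (so that the kernel $P$ is absorbed and no extra elements are pulled back)—the same compatibility that makes $\phi_q$ well defined as a reduction function. Once that is granted, the theorem is a one-line specialization of Theorem \ref{homo}(2) applied to the projection $\pi$, which is exactly why it is labeled an instant consequence.
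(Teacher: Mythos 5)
Your route is the paper's own: the paper offers no separate argument and simply derives this statement as an instant consequence of Theorem \ref{homo}(2) applied to the projection $\pi(r)=f(r,P,0^{(m-2)})$, exactly as you do, and your treatment of surjectivity, $Ker(\pi)=P$, and $\pi(Q)=Q/P$ is fine. However, there is a genuine gap in your key step, the claim that $\pi$ is a $\phi$-$\phi_q$-homomorphism. That definition demands $\phi(\pi^{-1}(I/P))=\pi^{-1}(\phi_q(I/P))$ for \emph{every} hyperideal $I/P$ of $R/P$. Since every preimage under $\pi$ contains $Ker(\pi)=P$, the right-hand side $\pi^{-1}(\phi(I)/P)=f(\phi(I),P,0^{(m-2)})$ always contains $P$, so the identity at $I$ forces $P\subseteq\phi(I)$. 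The hypothesis $P\subseteq\phi(Q)$ gives this only for $I=Q$ (and, by monotonicity of the reduction function, for $I\supseteq Q$), not for an arbitrary hyperideal $I$ containing $P$: already at $I=P$ it would require $\phi(P)=P$, which nothing guarantees (take $\phi(J)=g(J^{(2)})$ and $P$ not idempotent). Your parenthetical defence---that this inclusion is ``the same compatibility that makes $\phi_q$ well defined''---is not correct: $\phi_q(I/P)=\phi(I)/P$, read as the image $\pi(\phi(I))$, is a perfectly good reduction function with no such inclusion needed. So Theorem \ref{homo}(2) cannot be cited verbatim; its hypothesis is simply not satisfied by $\pi$ in general.

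The gap is easily repaired, which is presumably why the paper calls the result instant. Inspect the proof of Theorem \ref{homo}(2): the $\phi_1$-$\phi_2$-homomorphism assumption enters only through the single identity $\phi_2(h(Q_1))=h(\phi_1(Q_1))$ at the hyperideal $Q_1$ itself. For $h=\pi$ and $Q_1=Q$ this reads $\phi_q(Q/P)=\pi(\phi(Q))=\phi(Q)/P$, which holds by the very definition of $\phi_q$, and here $P\subseteq\phi(Q)$ guarantees $\pi^{-1}(\phi(Q)/P)=\phi(Q)$, so no spurious elements appear when pulling back. With this local compatibility in place of the global one, the lifting argument of Theorem \ref{homo}(2) goes through word for word: lift $s_i$ to $r_i$, use $Ker(\pi)=P\subseteq Q$ to get $g(r_1^{kn-k+1})\in Q-\phi(Q)$, apply the hypothesis on $Q$, and push the conclusion forward with $\pi$. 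So your proof has the right shape, but you should either verify the compatibility identity only at $Q$ (which is all that is used) or prove this localized variant of Theorem \ref{homo}(2), rather than asserting the global homomorphism condition, which can fail.
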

\begin{theorem}
 Let $(R_i,f_i,g_i)$ be a commutative
Krasner $(m,n)$-hyperring  for each $1 \leq i \leq kn-k+1$ and $\phi_i: \mathcal{HI}(R_i) \longrightarrow \mathcal{HI}(R_i) \cup \{\varnothing\}$ be a function. Let $Q_i$ be a  hyperideal of $R_i$ for each $1 \leq i \leq kn-k+1$ and $\phi=\phi_1 \times \cdots \times \phi_{kn-k+1}$. If $Q=Q_1 \times \cdots \times Q_{kn-k+1}$ is a $\phi$-$(k+1,n)$-absorbing primary hyperideal of $R=R_1 \times \cdots \times R_{kn-k+1}$, then $Q_i$ is a $\phi_i$-$(k,n)$-absorbing primary  hyperideal of $R_i$ and $Q_i \neq R_i$ for all $1 \leq i \leq kn-k+1$. 
 \end{theorem}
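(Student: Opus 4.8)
The plan is to run the contradiction argument of Theorem~\ref{car} inside the product hyperring, replacing its single ``absorbing'' conclusion by the ``absorbing primary'' dichotomy and carrying the radical ${\bf r^{(m,n)}}$ along each coordinate. I will use throughout that in $R=R_1\times\cdots\times R_{kn-k+1}$ the $n$-ary product is computed componentwise, that $1_R=(1_{R_1},\dots,1_{R_{kn-k+1}})$ is the scalar identity while $0$ is absorbing in each factor, that $\phi(Q)=\phi_1(Q_1)\times\cdots\times\phi_{kn-k+1}(Q_{kn-k+1})$, and that ${\bf r^{(m,n)}}(Q)={\bf r^{(m,n)}}(Q_1)\times\cdots\times {\bf r^{(m,n)}}(Q_{kn-k+1})$. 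I would first record that each $Q_i$ is proper, exactly as in Theorem~\ref{car}; this is needed below, since for a proper factor neither $1_{R_l}\in Q_l$ nor $1_{R_l}\in{\bf r^{(m,n)}}(Q_l)$ can hold.

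Fix an index $i$ and suppose, for contradiction, that $Q_i$ is not $\phi_i$-$(k,n)$-absorbing primary. Then there are $r_1^{kn-k+1}\in R_i$ with $g(r_1^{kn-k+1})\in Q_i-\phi_i(Q_i)$ while $g(r_1^{(k-1)n-k+2})\notin Q_i$ and no $g$-product of $(k-1)n-k+2$ of the $r_j$'s other than $g(r_1^{(k-1)n-k+2})$ lies in ${\bf r^{(m,n)}}(Q_i)$. Mirroring Theorem~\ref{car}, I lift the $r_j$ to $R$: for $1\le j\le kn-k+1$ let $a_j$ be $r_j$ in the $i$-th coordinate and $1_{R_l}$ elsewhere, and adjoin $n-2$ copies of $1_R$ together with the diagonal element $e=(0,\dots,0,1_{R_i},0,\dots,0)$, so that altogether there are $(k+1)n-(k+1)+1$ elements. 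A componentwise computation gives that the $g$-product of all of them equals $g(r_1^{kn-k+1})$ in the $i$-th coordinate (the identity factors and $e$ contribute only the absorbed $1_{R_i}$) and $0$ in every coordinate $l\neq i$ (because $e$ contributes the absorbing $0$). Hence this product lies in $Q$, and since its $i$-th coordinate is $g(r_1^{kn-k+1})\notin\phi_i(Q_i)$ it avoids $\phi(Q)$; thus it belongs to $Q-\phi(Q)$.

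Now I apply that $Q$ is $\phi$-$(k+1,n)$-absorbing primary. This yields one of two alternatives for the chosen $(k+1)n-(k+1)+1$ elements: either $g(a_1^{kn-k+1})\in Q$, or some $g$-product of $kn-k+1$ of them other than $g(a_1^{kn-k+1})$ lies in ${\bf r^{(m,n)}}(Q)$. The first alternative is impossible, because $g(a_1^{kn-k+1})$ equals $1_{R_l}$ in every coordinate $l\ne i$ and $1_{R_l}\notin Q_l$. In the second alternative the witnessing subproduct must contain $e$: otherwise some coordinate $l\ne i$ would again be $1_{R_l}$, which cannot lie in the proper radical ${\bf r^{(m,n)}}(Q_l)$. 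Once $e$ is present the off-diagonal coordinates are all $0\in{\bf r^{(m,n)}}(Q_l)$, and the $i$-th coordinate is a $g$-product of the surviving $r_j$'s lying in ${\bf r^{(m,n)}}(Q_i)$. Reading off that this is a $g$-product of $(k-1)n-k+2$ of the $r_j$'s distinct from $g(r_1^{(k-1)n-k+2})$ then contradicts the choice of the $r_j$'s, and the same argument applies for every $i$.

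The step I expect to be the true obstacle is this very last count: forcing the $i$-th coordinate in the second alternative to be a $g$-product of \emph{exactly} $(k-1)n-k+2$ of the $r_j$'s. Because the adjoined factors $1_R$ are the scalar identity, they are absorbed by $g$, so a witnessing set of size $kn-k+1$ containing $e$ collapses to a product of $(kn-k+1)-1-t$ of the $r_j$'s, where $t$ is the number of identity factors it happens to use; this equals $(k-1)n-k+2$ precisely when all $n-2$ identity factors are used, and is strictly larger otherwise. Pinning the witnessing set to the minimal count — equivalently, extracting a sub-product of exactly $(k-1)n-k+2$ of the $r_j$'s in ${\bf r^{(m,n)}}(Q_i)$ from a possibly larger one — is where the argument needs genuine care; it is immediate in the Krasner $(m,2)$ setting (where $n-2=0$ and no identity padding occurs), and in general relies on the number of padding factors being exactly $n-2$ so that the radical condition can be matched term by term. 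A minor supporting point is the factorwise radical identity ${\bf r^{(m,n)}}(\prod_l Q_l)=\prod_l {\bf r^{(m,n)}}(Q_l)$, which can be obtained from the power characterization of the radical (Theorem~4.23 of \cite{sorc1}).
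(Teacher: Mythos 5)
Your construction is exactly the route the paper intends (its entire proof of this theorem is the sentence ``by an argument similar to that of Theorem~\ref{car}''), but written out in full it does not close, and both leaks occur at your ``second alternative''. The first gap is one you do not flag: you assert that the witnessing subproduct, because it contains $e$ and is therefore a \emph{different selection} than $\{a_1,\dots,a_{kn-k+1}\}$, projects in the $i$-th coordinate to a $g$-product of $(k-1)n-k+2$ of the $r_j$'s \emph{distinct from} $g(r_1^{(k-1)n-k+2})$. That inference is false: the selection $\{a_1,\dots,a_{(k-1)n-k+2}\}\cup\{1_R^{(n-2)}\}\cup\{e\}$ is different from the first segment, yet its $i$-th coordinate is exactly $g(r_1^{(k-1)n-k+2})$; and the negation of ``$Q_i$ is $\phi_i$-$(k,n)$-absorbing primary'' excludes this element from $Q_i$ but \emph{not} from ${\bf r^{(m,n)}}(Q_i)$, since it is precisely the exceptional product in the definition. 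So no contradiction follows, and this already breaks the case $n=2$, which you call immediate. The repair is to apply the $(k+1,n)$-hypothesis with the elements ordered as $a_1,\dots,a_{(k-1)n-k+2},1_R^{(n-2)},e,a_{(k-1)n-k+3},\dots,a_{kn-k+1}$, so that $e$ and the pads sit \emph{inside} the first segment: then the first disjunct reads exactly ``$g(r_1^{(k-1)n-k+2})\in Q_i$'' and the dangerous selection becomes the one the definition excludes. With that reordering (and properness of every $Q_l$) the case $n=2$ does go through; this asymmetric placement of the zero-like element is how the classical product argument for rings works.

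The obstacle you flag yourself is a genuine gap for $n\geq 3$, not a bookkeeping issue, and it cannot be fixed inside this approach: a witnessing selection containing $e$ but only part of the padding projects to a product of $t>(k-1)n-k+2$ of the $r_j$'s lying in ${\bf r^{(m,n)}}(Q_i)$, and nothing in the failure of the $\phi_i$-$(k,n)$-absorbing primary condition constrains such longer products. Concretely, take $k=1$, $n=3$, each $R_l=\mathbb{Z}$ with ternary multiplication, $Q_1=12\mathbb{Z}$, and witnesses $(r_1,r_2,r_3)=(4,3,1)$: the lifted $5$-tuple always satisfies the $(2,3)$-absorbing primary conclusion through the subproduct on $a_1,a_2,e$, whose $i$-th coordinate is $4\cdot 3=12\in{\bf r^{(m,n)}}(12\mathbb{Z})=6\mathbb{Z}$, so no contradiction can ever be extracted from the lifted tuple, under any ordering. (The same count problem sits, unacknowledged, in the paper's own proof of Theorem~\ref{car}.) A correct proof for $n\geq 3$ must use test elements whose coordinates $l\neq i$ range over $Q_l$, not merely over $1_{R_l}$ and $0$, as in the classical ring-theoretic analogues. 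Finally, you lean on properness of every $Q_l$ in both alternatives, citing Theorem~\ref{car}; but that proof never establishes properness, and indeed it cannot: $4\mathbb{Z}\times\mathbb{Z}$ is a $(2,2)$-absorbing primary (hence $\phi$-$(2,2)$-absorbing primary) hyperideal of $\mathbb{Z}\times\mathbb{Z}$, so the component ``$Q_i\neq R_i$'' of the statement fails in general. These defects are inherited from the paper, whose proof is only a pointer, but as written your proposal does not prove the theorem.
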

\begin{proof}
By using an argument similar to that in the proof of Theorem \ref{car},
one can easily complete the proof.
\end{proof}
\section{Conclusion}
In this paper, motivated by the research works on $\phi$-2-absorbing (primary) ideals of  commutative rings, we propsed and  investigated the notions of $\phi$-$(k,n)$-absorbing and $\phi$-$(k,n)$-absorbing primary hyperideals in a Krasner $(m,n)$-hyperring.
 Some of their essential characteristics were analysed.
Moreover,  the stabilty of the notions were examined in some
hyperring-theoretic constructions. 
As a new research subject, we suggest the concept of $\phi$-$(k,n)$-absorbing $\delta$-primary hyperideals, where $\delta$ is an expansion function of $\mathcal{HI}(R)$.

\end{document}